\documentclass[journal,onecolumn,draftcls]{IEEEtran}
%

\usepackage{amsmath}
\usepackage{amssymb}
\usepackage{amsfonts}
\usepackage{epsfig}
\usepackage[utf8]{inputenc}
\usepackage[english]{babel}
\usepackage{amsthm}
\usepackage{algorithm}
\usepackage{algpseudocode}
\usepackage[dvipsnames]{xcolor}
\usepackage{authblk}
\usepackage{cite}

\usepackage{subfigure}
\usepackage{dsfont}

\def\l{\left}
\def\r{\right}
\newcommand{\expect}[1]{\mathbb{E}{\l[#1\r]}}
\newcommand{\dotp}[2]{\left\langle#1,#2\right\rangle}
\newcommand{\mc}[1]{\mathcal{#1}}
\newcommand{\mf}[1]{\mathbf{#1}}
\newcommand{\mb}{\mathbb}
\newcommand{\tc}{\textcolor}

\newcommand{\DefinedAs}[0]{\mathrel{\mathop:}=}
\DeclareMathOperator*{\minimize}{minimize}
\DeclareMathOperator*{\maximize}{maximize}
\DeclareMathOperator*{\argmin}{argmin} 
\DeclareMathOperator*{\argmax}{argmax}

\definecolor{figcolor1}{RGB}{33,255,255}

\theoremstyle{remark}
\usepackage[figurename=Fig.]{caption}




\newtheorem{theorem}{Theorem}

\newtheorem{lemma}[theorem]{Lemma}

\newtheorem{definition}{Definition}
\newtheorem{remark}{Remark}




\newcommand{\enma}[1]   {\ensuremath{#1}}

\newcommand{\non}{\nonumber}

\newcommand{\beq}{\begin{equation}}
\newcommand{\eeq}{\end{equation}}
\newcommand{\bseq}{\begin{subequations}}
\newcommand{\eseq}{\end{subequations}}
\newcommand{\beqn}{\begin{eqnarray}}
\newcommand{\eeqn}{\end{eqnarray}}
\newcommand{\ba}{\begin{array}}
\newcommand{\ea}{\end{array}}
\newcommand{\bct}{\begin{center}}
\newcommand{\ect}{\end{center}}
\newcommand{\btmz}{\begin{itemize}}
\newcommand{\etmz}{\end{itemize}}
\newcommand{\benum}{\begin{enumerate}}
\newcommand{\eenum}{\end{enumerate}}










\newcommand{\norm}[1]{\| #1 \|}                 
\newcommand{\Norm}[1]{\left\| #1 \right\|}

\newcommand{\diag}      {\enma{\mathrm{diag}}}



















\newcommand{\be}{\begin{equation}}
\newcommand{\ee}{\end{equation}}

\newcommand{\cplxs}{ C\kern -.35em \rule{0.03 em}{.7 ex}~   }

\def\complex{\hbox{C\kern -.45em \rule{0.03 em}{1.5 ex}}~}

\newcommand{\bi}{\begin{itemize}}
\newcommand{\ei}{\end{itemize}}

\newtheorem{assumption}{Assumption}

\renewcommand{\baselinestretch}{1.38} 

\hyphenation{op-tical net-works semi-conduc-tor}

\makeatletter
\def\endthebibliography{%
	\def\@noitemerr{\@latex@warning{Empty `thebibliography' environment}}%
	\endlist
}
\makeatother

\begin{document}
	
%
\title{\huge Fast Multi-Agent Temporal-Difference Learning via \\[-0.15cm] Homotopy Stochastic Primal-Dual Optimization}
%
%
%

\author{
Dongsheng~Ding,~Xiaohan~Wei,~Zhuoran~Yang,~Zhaoran~Wang, and Mihailo~R.~Jovanovi\'{c}
\thanks{The work of D.\ Ding and M.\ R.\ Jovanovi\'{c} is supported in part by the National Science Foundation under awards ECCS-1708906 and 1809833.}
\thanks{
D.\ Ding, X.\ Wei and M.\ R.\ Jovanovi\'{c} are with the Ming Hsieh Department of Electrical and Computer Engineering, University of Southern California, Los Angeles, CA 90089.
Z.\ Yang is with the Department of Operations Research and Financial Engineering, Princeton University, Princeton, NJ 08544.
Z.\ Wang is with the Department of Industrial Engineering and Management Sciences, Northwestern University, Evanston, IL 60208. {E-mails: dongshed@usc.edu, xiaohanw@usc.edu, zy6@princeton.edu, zhaoran.wang@northwestern.edu, mihailo@usc.edu.}
}

}

\maketitle

	\vspace*{-6ex} 
\begin{abstract}
	We study the policy evaluation problem in multi-agent reinforcement learning where a group of agents, with jointly observed states and private local actions and rewards, collaborate to learn the value function of a given policy via local computation and communication \tc{black}{over a connected undirected network.} This problem arises in various large-scale multi-agent systems, including power grids, intelligent transportation systems, wireless sensor networks, and multi-agent robotics. When the dimension of state-action space is large, the temporal-difference learning with linear function approximation is widely used. In this paper, we develop a new distributed temporal-difference learning algorithm \tc{black}{and quantify its finite-time performance. Our algorithm combines a distributed stochastic primal-dual method with a homotopy-based approach to adaptively adjust the learning rate in order to minimize the mean-square projected Bellman error by taking fresh online samples from a causal on-policy trajectory. We explicitly take into account the Markovian nature of sampling and improve the best-known finite-time error bound from $O(1/\sqrt{T})$ to~$O(1/T)$, where $T$ is the total number of iterations.} 
\end{abstract}

	\vspace*{-2ex}
\begin{IEEEkeywords}
Multi-agent reinforcement learning, distributed temporal-difference learning, stochastic saddle point problems, primal-dual method, homotopy method. 
\end{IEEEkeywords}

%
\IEEEpeerreviewmaketitle

	\vspace*{-2ex}
\section{Introduction}

	Temporal-difference (TD) learning is a central approach to policy evaluation in modern reinforcement learning (RL)~\cite{sutton2018reinforcement}. It was introduced in~\cite{sutton1988learning, bertsekas1996neuro, baird1995residual} and significant advances have been made in a host of single-agent decision-making applications, including Atari or Go games~\cite{mnih2015human,silver2016mastering}. Recently, TD learning has been used to address multi-agent decision making problems for large-scale systems, including power grids~\cite{misra2013residential}, intelligent transportation systems~\cite{kuyer2008multiagent}, wireless sensor networks~\cite{pennesi2010distributed}, and multi-agent robotics~\cite{kober2013reinforcement}. Motivated by these applications, we introduce an extension of TD learning to a distributed setting of policy evaluation. This setup involves a group of agents that communicate over a connected undirected network. All agents share a joint state and dynamics of state transition are governed by the local actions of agents which follow a local policy and own a private local reward. To maximize the total reward, i.e., the sum of all local rewards, it is essential to quantify performance that each agent achieves if it follows a particular policy \tc{black}{while interacting with the environment and using only local data and information exchange with its neighbors.} This task is usually referred to as a distributed policy evaluation problem and it has received significant recent attention~\cite{macua2014distributed,mathkar2016distributed,lee2018primal,wai2018multi,cassano2018multi,doan2019convergence,doan2019finite,sun2019finite,sha2020asynchronous}.

	\vspace*{-2ex}
\subsection{Related work}

TD learning with linear function approximators is a popular approach for estimating the value function for an agent that follows a particular policy. The asymptotic convergence of the original TD method, \tc{black}{which is known as TD(0),} and its extension TD($\lambda$) was established in~\cite{tsitsiklis1997analysis}. \tc{black}{In spite of their wide-spread use}, these algorithms can become unstable and convergence cannot be guaranteed in off-policy learning scenarios~\cite{baird1995residual,szepesvari2010algorithms}. To ensure stability, batch methods, e.g., Least-Squares Temporal-Difference (LSTD) learning~\cite{bradtke1996linear}, have been proposed at the expense of increased computational complexity. To achieve both stability and low computational cost, a class of gradient-based temporal-difference (GTD) algorithms~\cite{sutton2009convergent,sutton2009fast}, e.g., GTD, GTD2, and TDC, were proposed and their asymptotic convergence in off-policy settings was established by analyzing certain ordinary differential equations (ODEs)~\cite{borkar2000ode}. However, these are not true stochastic gradient methods with respect to the original objective functions~\cite{szepesvari2010algorithms} because the underlying TD objectives, e.g., mean square Bellman error (MSBE) in GTD or mean square projected Bellman error (MSPBE) in GTD2, involve products and inverses of expectations. As such, these cannot be sampled directly and it is difficult to analyze efficiency with respect to the TD objectives.

The \tc{black}{finite-time or finite-sample performance} analysis of TD algorithms is critically important in applications \tc{black}{with limited time budgets and finite amount of data.} Early results are based on the stochastic approximation approach under i.i.d.\ sampling. For TD(0) and GTD, \tc{black}{$O(1/T^{\alpha})$ error bound with $\alpha\in(0,0.5)$ was established in~\cite{dalal2018finite, dalal2017finite} and an improved $O(1/T)$ bound was provided in~\cite{lakshminarayanan2018linear}, where $T$ is the total number of iterations. In the Markov setting, $O (1/T)$ bound was established for TD(0) that involves a projection step~\cite{bhandari2018finite}; for a linear stochastic approximation algorithm driven by Markovian noise~\cite{srikant2019finite}; and for an on-policy TD algorithm known as SARSA~\cite{zou2019finite}.} Recent work~\cite{hu2019characterizing} provides complementary \tc{black}{analysis of} TD algorithms using the Markov jump linear system theory. To enable both on- and off-policy implementation, an optimization-based approach~\cite{liu2015finite} was used to cast MSBPE into a convex-concave objective that allows \tc{black}{the use of} stochastic gradient algorithms~\cite{nemirovski2009robust} with $O(1/\sqrt T)$ bound for i.i.d.\ samples; \tc{black}{an extension of this approach} to the Markov setting was provided in~\cite{wang2017finite}. In~\cite{touati2017convergent}, the \tc{black}{finite-time error bound} of GTD was improved to $O(1/T)$ for i.i.d.\ sampled data but it remains unclear how to extend these results to multi-agent scenarios where data is not only Markovian but also distributed over a network.

In the context of distributed policy evaluation, several attempts have been made to extend TD algorithms to a multi-agent setup using linear function approximators. {\color{black}When the reward is global and actions are local, mean square convergence of a distributed primal-dual GTD algorithm for minimizing MSPBE, with diffusion updates, was established in~\cite{macua2014distributed} and an extension to time-varying networks was made in~\cite{stankovic2016multi}.} In~\cite{mathkar2016distributed}, the authors combine the gossip averaging scheme~\cite{boyd2005gossip} with TD(0) and show asymptotic convergence. In the off-line setting, references~\cite{wai2018multi} and~\cite{cassano2018multi} propose different consensus-based primal-dual algorithms for minimizing a batch-sampled version of MSPBE with linear convergence; a fully asynchronous gossip-based extension was studied in~\cite{sha2020asynchronous} and its communication efficiency was analyzed  in~\cite{ren2021communication}. To understand/gain data efficiency, the recent focus of multi-agent TD learning research has shifted to \tc{black}{finite-time or finite-sample performance analysis}. \tc{black}{For distributed TD(0) and TD($\lambda$) with local rewards, $O(1/T)$ error bound was established  in~\cite{doan2019convergence} and~\cite{doan2019finite}, respectively. Linear convergence of distributed TD(0) to a neighborhood of the stationary point was proved in~\cite{sun2019finite} and $O(1/\sqrt T)$ error bound for distributed GTD was shown in~\cite{lee2018primal}. In~\cite{doan2019finited}, $O(1/T^{2/3})$ error bound was provided for a distributed variant of two-time-scale stochastic approximation algorithm.} Apart from~\cite{doan2019finite,sun2019finite}, other finite sample results rely on the i.i.d.\ state sampling in policy evaluation. In most RL applications, this assumption is overly restrictive because of the Markovian nature of state trajectory samples. In~\cite{gyorfi1996averaged}, an example was provided to demonstrate that i.i.d.\ sampling-based convergence \tc{black}{guarantees can fail to hold} when samples become correlated. It is thus relevant to examine how to design an online distributed learning algorithm for the policy evaluation problem (e.g., MSPBE minimization) in the Markovian setting. Such distributed learning algorithms are essential in multi-agent RL; e.g., see the distributed variant of policy gradient theorem~\cite{zhang2018fully} along with recent surveys~\cite{zhang2019multi,zhang2019decentralized,lee2019optimization}.

		\vspace*{-2ex}
\subsection{Our contributions}

	Our paper contains two main contributions. The first contribution is to provide a new distributed TD learning algorithm for \tc{black}{the multi-agent policy evaluation problem that only takes fresh online samples from a causal on-policy trajectory. We refer to our algorithm as a distributed homotopy stochastic primal-dual method because it maintains stochastic primal-dual updates over the network while employing homotopy-based adaptation of the learning rate in each restarting round.} Our approach \tc{black}{combines features of} distributed dual averaging~\cite{duchi2012dual} and homotopy-based~\cite{xiao2013proximal} approaches. \tc{black}{In contrast to prior results~\cite{lee2018primal,lee2018stochastic}}, we show that the optimal \tc{black}{finite-time error bound} $O(1/T)$ for stochastic convex optimization~\cite{agarwal2009information} is achieved and we explicitly characterize the influence of network size and topology \tc{black}{on finite-time performance}. The second contribution is an ergodic analysis that explicitly takes into account Markovian nature of samples. Our analysis \tc{black}{extends} ergodic mirror descent~\cite{duchi2012ergodic} to stochastic primal-dual algorithms, \tc{black}{departs from i.i.d.\ restriction~\cite{wai2018multi,doan2019convergence,doan2019finited},} and addresses a broader class of problems than the existing literature. 

	We formulate the multi-agent TD learning as the minimization problem of mean square projected Bellman error (MSPBE). \tc{black}{We employ Fenchel duality} to cast the MSPBE minimization as a stochastic saddle point problem where the primal-dual objective is convex and strongly-concave \tc{black}{with respect to primal and dual variables, respectively.} Since the primal-dual objective has a linear dependence on expectations, we can obtain unbiased estimates of gradients from state samples \tc{black}{thereby overcoming a challenge that approaches based on naive TD objectives face~\cite{liu2015finite}.} This allows us to design a true stochastic primal-dual learning algorithm and perform the \tc{black}{finite-time performance analysis} in Markov setting~\cite{nemirovski2009robust,duchi2012ergodic}. \tc{black}{Our primal-dual formulation utilizes distributed dual averaging~\cite{duchi2012dual} and for our homotopy-based distributed learning algorithm we establish a sharp finite-time error bound} in terms of network size and topology. This differentiates our work from the approaches and results in~\cite{lee2018primal,doan2019convergence,doan2019finite}. To the best of our knowledge, we are the first to utilize the homotopy-based approach for solving a class of distributed convex-concave saddle point programs with \tc{black}{$O(1/T)$ finite-time performance bound.} 

		\vspace*{-2ex}
\subsection{Paper outline} 

Our presentation is organized as follows. In Section~\ref{sec.probl}, \tc{black}{we introduce a class of multi-agent stochastic saddle point problems that contain, as a special instance, minimization of a mean square projected Bellman error via distributed TD learning.} In Section~\ref{sec.alg}, we \tc{black}{develop a homotopy-based} online distributed primal-dual algorithm to solve this problem and \tc{black}{establish a finite-time performance bound for} the proposed algorithm. In Section~\ref{sec.convergence}, we \tc{black}{prove the main result,} in Section~\ref{sec.comp}, we offer computational experiments to demonstrate the merits and the effectiveness of our theoretical findings and, in Section~\ref{sec.concl}, we close the paper with concluding remarks.

		\newpage
\section{Problem \tc{black}{formulation and background}}
\label{sec.probl}

\tc{black}{In this section, we formulate a multi-agent stochastic saddle point problem over a connected undirected network. The motivation for studying this class of problems comes from distributed reinforcement learning where a group of agents with jointly observed states and private local actions/rewards collaborate to learn the value function of a given policy via local computation and communication. We exploit the structure of the underlying optimization problem to demonstrate that it enables unbiased estimation of the saddle point objective from Markovian samples. Furthermore, we discuss an algorithm that is convenient for distributed implementation and finite-time \mbox{performance analysis.}}
		
\vspace*{-2ex}
\subsection{\color{black}Multi-agent stochastic optimization problem}
\label{sec.SP}

{\color{black}We consider a stochastic optimization problem over a connected undirected network $\mc G = (\mc V, \mc E)$ with $N$ agents,
\begin{subequations}\label{eq.single}
\begin{equation}
\label{eq.maso}
\minimize\limits_{x \, \in \, \mc X} \; 
	\dfrac{1}{N} \displaystyle \sum_{j \, = \, 1}^{N} f_j(x)
\end{equation}
where $\mc V = \{1,\ldots,N\}$ is the set of nodes, $\mc E \subset \mc V\times \mc V$ is the set of edges, $x$ is the optimization variable, $\mathcal{X} \subset \mathbb{R}^d$ is a compact convex set, and $f_j$: $\mathcal{X}\to\mathbb{R}$ is a local objective function determined by,
\begin{equation}
	f_j(x)  
	\, =  \, 
	\max_{y_j \, \in \, \mathcal Y} 
	~ 
	\underbrace{\mathbb E_{\xi\sim\Pi}[\,\Psi_j(x, y_j; \xi)\,]}_{\psi_j (x, y_j)}.
\label{eq.maso_local}
\end{equation}
\end{subequations}
Here, $y_j$ is a local variable that belongs to a convex compact set $\mathcal Y\subset \mathbb{R}^d$ and $\Psi_j(x, y_j; \xi)$ is a stochastic function of a random variable $\xi$ which is distributed according to the stationary distribution $\Pi$ of a Markov chain. Equivalently, problem~\eqref{eq.single} can be cast as a multi-agent stochastic saddle point problem,
\be
\minimize\limits_{ x \, \in \, \mathcal X} 
\;
\maximize\limits_{ y_j \, \in \, \mathcal Y} 
\;
\dfrac{1}{N} \displaystyle \sum_{j \, = \, 1}^{N} \psi_j (x, y_j) 
\label{eq.main}
\ee
with the primal variable $x$ and the dual variable $y \DefinedAs (y_1,\ldots,y_N)$. 

Each agent $j$ can only communicate with its neighbors over the network $\mathcal{G}$ and receive samples $\xi$ from a stochastic process that converges to the stationary distribution $\Pi$. Optimal solution $(x^\star,y^\star)$ to the saddle point problem~\eqref{eq.main} satisfies
\begin{equation*}
\begin{array}{rcl}
x^\star 
& \!\! \DefinedAs \!\! & 
\argmin\limits_{x \, \in \, \mc X} ~ \dfrac{1}{N}\displaystyle\sum_{j \, = \, 1}^{N}\psi_j(x, y_{j}^\star)
\\[0.25cm]
y_j^\star 
& \!\! \DefinedAs \!\! &
\argmax\limits_{y_j \, \in \, \mc Y} ~ \psi_j(x^\star, y_{j})
\end{array}
\end{equation*}
and the primary motivation for studying this class of problems comes from multi-agent reinforcement learning where a group of agents with jointly observed states and private local actions/rewards collaborate to learn the value function of a given policy via local computation and communication. Although our theory and algorithm can be readily extended to other settings, in this paper we restrict our attention to the Markovian structure in the context of policy evaluation. Formulation~\eqref{eq.single} arises in a host of large-scale multi-agent systems, e.g., in supervised learning~\cite{ying2016stochastic} and in nonparametric regression~\cite{dikkala2020minimax}, and we describe it next.}

		\vspace*{-2ex}
\subsection{Multi-agent Markov decision process}\label{sec.MDP}

{\color{black}Let us consider a control system described by a Markov decision process (MDP) over a connected undirected network $\mc G$ with $N$ agents, the state space $\mathcal{S}$, and the joint action space $\mathcal A \DefinedAs \mathcal{A}_1 \times \cdots \times \mathcal{A}_N$.} Without loss of generality we assume that, for each agent $j$, the local action space $\mathcal A_j$ is the same for all states. Let $\mathcal{P}^a = [\,\mathcal{P}_{s,s'}^a\,]_{s,s'\in\mathcal{S}}$ be the probability transition matrix under a joint action $a = (a_1,\ldots, a_N )\in \mathcal A$, where $\mathcal{P}_{s,s'}^a$ is the transition probability from state $s$ to state $s'$ and let $\mathcal{R}_j(s,a)$ be the local reward received by agent $j$ that corresponds to the pair $(s,a)$. The multi-agent MDP can be represented by the tuple,
\be
\label{eq.mdp}
\left(\,
{\color{black}\mc S}
,\, \{\mathcal{A}_j \}_{j=1}^N,\, \mathcal{P}^a,\, \{\mathcal{R}_j \}_{j=1}^N,\, \gamma\,\right)
\ee
where $\gamma\in(0,1)$ is a fixed discount factor. 

When the state, actions, and rewards {are all} globally observable, the multi-agent MDP simplifies to a single-agent MDP. In many applications (e.g., see~\cite{kober2013reinforcement,misra2013residential,lee2018primal}), both the actions $a_j\in\mathcal{A}_j$ and the rewards $\mathcal{R}_j(s,a)$ are private and every agent can only communicate with its neighbors over the network $\mc G$. It is thus critically important to extend single-agent TD learning algorithms to a setup in which only local information exchange is available. 

\tc{black}{We consider a cooperative learning task in which agents aim to maximize the total reward $(1/N)\sum_{j} \mathcal{R}_j(s,a)$ and, in Fig.~\ref{fig.MARL}, we illustrate the interaction between the environment and the agents.} Let $\pi$: $\mathcal{S}\times\mathcal{A}\rightarrow [0,1]$ be a joint policy which specifies the probability to take an action $a\in\mathcal{A}$ at state $s\in\mathcal{S}$. We define the global reward $R^{\pi}(s)$ at state $s\in\mathcal S$ under policy $\pi$ to be the expected value of the average of all local rewards, 
\begin{equation}
	\label{eq.globalreward}
	\textstyle
	R^{\pi}(s) \,=\, \dfrac{1}{N} \displaystyle\sum_{j \, = \, 1}^{N} R_j^{\pi}(s)
\end{equation}
where $R_j^{\pi}(s) \DefinedAs \mathbb{E}_{a \, \sim \, \pi (\cdot|s)}\left[\,\mathcal{R}_j(s,a)\, \right]$.

For any fixed joint policy $\pi$, the multi-agent MDP becomes a Markov chain over $\mathcal{S}$ with the probability transition matrix $\mf P^{\pi} \in \mathbb{R}^{|\mathcal{S}| \times |\mathcal{S}|}$, where
	$
\mf P_{s,s'}^{\pi} = \sum_{a\in\mathcal{A}} \pi(a\,|\,s) \mathcal{P}_{s,s'}^a
	$
\tc{black}{is the $(s,s')$-element of $\mf P^{\pi}$. If the Markov chain associated with the policy $\pi$ is aperiodic and irreducible then, for any initial state, it converges to the unique stationary distribution $\Pi$ with a geometric rate~\cite{levin2017markov}; see Assumption~\ref{as.mc_general} for a formal statement.} 

\begin{figure}[]
	\centering
	\includegraphics[width=7.8cm,height=4.9cm]{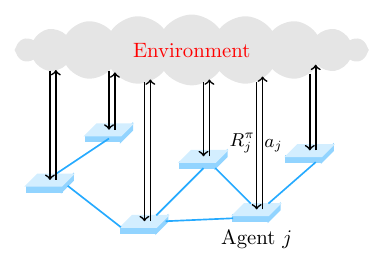}
	\caption{A system with six agents that communicate over a connected undirected network. Each agent interacts with the environment by receiving a private reward and taking a local action. }
	\label{fig.MARL}
\end{figure}

	\vspace*{-2ex}
\subsection{\color{black}Multi-agent policy evaluation and TD learning}
\label{sec.TD}

Let the value function of a policy $\pi$, $V^{\pi}$: $\mathcal{S}\rightarrow \mathbb{R}$, be defined as the expected value of discounted cumulative rewards,
\begin{equation*}
	\label{eq.value}
	\textstyle
	V^{\pi} (s) \,=\,\mathbb{E} \left[\, \displaystyle\sum_{p \, = \, 0}^{\infty} \gamma^p \mathcal{R}^{\pi} (s_p) \,\Big\vert\, s_0=s, \; \pi \,\right]
\end{equation*}
where $s_0=s$ is the initial state. 
If we arrange $V^{\pi}(s)$ and $R^{\pi}(s)$ over all states $s\in\mathcal{S}$ into the vectors $\mathbf{V}^{\pi}$ and $\mathbf{R}^{\pi}$, the Bellman equation for $\mathbf{V}^{\pi}$ can be written as~\cite{puterman2014markov},
\begin{equation}
	\label{eq.bellman}
	\mathbf{V}^{\pi} 
	\, =  \, 
	\gamma \, \mathbf{P}^{\pi} \mathbf{V}^{\pi} \, + \, \mathbf{R}^{\pi} .
\end{equation}
Since it is challenging to evaluate $\mathbf{V}^{\pi}$ directly for a large state space, we approximate $\mathbf{V}^{\pi}$ using a family of linear functions 
$
\{
V_{x} (s) ={\phi}^T (s)\, {x},\, x\in \mathbb{R}^d
\}
$,
where $x\in\mathbb{R}^d$ is the vector of unknown parameters and $\phi$: $\mathcal{S}\rightarrow \mathbb{R}^d$ is a known dictionary consisting of $d$ features. If we arrange $\{V_{x}(s)\}_{s \,\in\, \mathcal{S}}$ into the vector $\mathbf{V}_{x}\in\mathbb{R}^{|\mathcal{S}|}$, we have $\mathbf{V}_{x} = \Phi\, x$ where the $i$th row of the matrix $\Phi \in \mathbb{R}^{|\mathcal{S}|\times d}$ is given by $\phi^T (s_i)$. Since the dictionary is a function \tc{black}{determined by}, e.g., polynomial basis, it is not restrictive \tc{black}{to assume that the matrix $\Phi$} has the full column rank~\cite{bertsekas1996neuro}.

The goal of policy evaluation now becomes to determine the \tc{black}{vector of} feature weights $x\in\mathbb{R}^d$ so that $\mathbf{V}_x$ approximates the true value function $\mathbf{V}^\pi$. The objective of a typical TD learning method is to minimize the mean square Bellman error (MSBE)~\cite{sutton2009convergent}, 
	\[
	\tfrac{1}{2} \, \Vert \mathbf{V}_{x} - \gamma\, \mathbf{P}^{\pi} \mathbf{V}_{x} - \mathbf{R}^{\pi}\Vert^2_{ D}
	\]
where $ D \DefinedAs \mathrm{diag} \{ \Pi(s),s\in\mathcal{S} \}\in\mathbb{R}^{|\mathcal{S}|\times|\mathcal{S}|}$ is a diagonal matrix determined by the stationary distribution $\Pi$. As discussed in~\cite{sutton2009fast}, the solution to the fixed point problem
$ \mathbf{V}_{x} = \gamma \mathbf{P}^{\pi} \mathbf{V}_{x} + \mathbf{R}^{\pi}$ may not exist because the right-hand-side may not stay in the column space of the matrix $\Phi$. To address this challenge, GTD algorithm~\cite{sutton2009fast} was proposed to minimize the mean square projected Bellman error (MSPBE),
	\[
	f(x)
	\, \DefinedAs \,
	\tfrac{1}{2} 
	\Vert  
	P_{\Phi} 
	( \mathbf{V}_{x} - \gamma \mathbf{P}^{\pi} \mathbf{V}_{x} - \mathbf{R}^{\pi} )
	\Vert^2_{ D}
	\]
via stochastic-gradient-type updates, where $P_{\Phi} \DefinedAs \Phi(\Phi^T D \Phi)^{-1}\Phi^T D$ is a projection operator onto the column space of $\Phi$. \tc{black}{Equivalently, MSPBE is given by
	$
	f(x) 
	= 
	\tfrac{1}{2} 
			\left\Vert 
			\Phi^T D 
			\left( 
			\mathbf{V}_{x}  
			- 
			\gamma\, \mathbf{P}^{\pi} \mathbf{V}_{x} 
			- 
			\mathbf{R}^{\pi} 
			\right)  
			\right\Vert^2_{\left( \Phi^T 	D \Phi \right)^{-1}}
	$
and it can be compactly written as,
	\begin{subequations}
	\label{eq.mspbe}
	\begin{equation}
			f (x) 
			\, = \, 
			\tfrac{1}{2} \left\Vert  Ax \, - \, b\right\Vert^2_{ C^{-1}} 
			\label{eq.f}
	\end{equation} 
where $A$, $b$, and $C$ are obtained by taking expectations over the stationary distribution $\Pi$,
		\begin{equation}
		\begin{array}{rcl}
			A 
			& \!\! \DefinedAs \!\! & 
			\mathbb{E}_{s\,\sim\,\Pi} [ \, \phi(s) (\phi(s) -\gamma\phi(s') )^T \,]
			\\[0.cm]
			b 
			& \!\! \DefinedAs \!\! & 
			\mathbb{E}_{s\,\sim\,\Pi} [ \, R^{\pi}(s)\phi(s)\, ]
			\\[0.cm] 
			C  
			& \!\! \DefinedAs \!\! &  
			\mathbb{E}_{s\,\sim\,\Pi} [ \,\phi(s)\phi(s)^T \,].
		\end{array}
		\label{eq.AbC}
	\end{equation}
	\end{subequations}
\begin{assumption}\label{as.AC}
	There exists a feature matrix $\Phi$ such that the matrices $A$ and $C$ are full rank and positive definite, respectively.
\end{assumption}}
In~\cite[page~300]{bertsekas1996neuro}, it was shown that the full column rank matrix $\Phi$ yields a full rank $A$ and a positive definite $C$ and that the objective function $f$ in~\eqref{eq.mspbe} has a unique minimizer. {\color{black}Nevertheless, when $A$, $b$, and $C$ are replaced by their sampled versions it is challenging to solve~\eqref{eq.mspbe} because their nonlinear dependence on the underlying samples introduces bias in the objective function $f$. In what follows, we address the sampling challenge by reformulating~\eqref{eq.mspbe} in terms of a saddle-point objective. }	

\tc{black}{Since the global reward $R^{\pi}(s)$ in~\eqref{eq.globalreward} is determined by the average of all local rewards $R_j^{\pi}(s)$, we can express the vector $b$ as $b = (1/N)\sum_{j}  b_j$, where $ b_j \DefinedAs \mathbb{E}_{s \sim \Pi} [\, R_j^{\pi}(s)\phi(s)\,]$. Thus, the problem of minimizing MSPBE~\eqref{eq.mspbe} can be cast as
\begin{equation}
	\label{eq.mspbesum}
	\minimize\limits_{x \, \in \, \mc X} 
	\;  
	\dfrac{1}{N} \displaystyle \sum_{j \, = \, 1}^{N} 	
	\dfrac{1}{2} \left\Vert  Ax \, - \,  b_j\right\Vert^2_{C^{-1}}
\end{equation}
where $f_j (x) \DefinedAs \tfrac{1}{2} \left\Vert  Ax-  b_j\right\Vert^2_{C^{-1}}$ is the local MSPBE for the agent $j$ and $\mc X \subset \mathbb{R}^d$ is a compact convex set that contains the unique minimizer of $f (x) = (1/N)\sum_{j}  f_j (x)$.}	

A decentralized stochastic optimization problem~\eqref{eq.mspbesum} with $N$ private stochastic objectives involves products and inverses of the expectations; \tc{black}{cf.~\eqref{eq.AbC}}. This unique feature of MSPBE is not encountered in typical distributed optimization settings~\cite{nedic2009distributed,duchi2012dual} and it makes the problem of obtaining an unbiased estimator of the objective function from a few state samples challenging. 

\tc{black}{Using Fenchel duality, we can express each local MSPBE in~\eqref{eq.mspbesum} as}
\begin{equation}
	f_j (x) 
	\, = \, 
	\max_{y_j \, \in \, \mathcal Y} \left( y_j^T(Ax-  b_j)  - \tfrac{1}{2} \, y_j^TCy_j \right)
	\label{eq.fj-new}
\end{equation}
where $y_j$ is a dual variable and $\mathcal Y\subset \mathbb{R}^d$ is a convex compact set such that $C^{-1}(Ax-  b_j)\in\mathcal Y$ for all $x\in\mathcal X$. Since $C$ is a positive definite matrix and $\mathcal X$ is a compact set, such $\mathcal Y$ exists. In fact, one could take a ball centered at the origin with a radius greater than {$(1/\lambda_{\min}(C)) \sup \|Ax-b_j\|$, where the supremum is taken over $x\in\mathcal X$ and $j \in \{1,\ldots,N \}$}. Thus, we can reformulate~\eqref{eq.mspbesum} as a decentralized stochastic saddle point problem~\eqref{eq.main} with $\psi_j (x, y_j) = y_j^T(Ax-  b_j)  - \tfrac{1}{2} \, y_j^TCy_j$. \tc{black}{By replacing expectations in the expressions for $A$, $b_j$, and  $C$ with their samples that arise from the stationary distribution $\Pi$, we obtain an unbiased estimate of the saddle point objective $\psi_j$. We also note that each agent $j$ indeed takes a local MSPBE as its local objective function $f_j(x) = \tfrac{1}{2} \left\Vert  Ax-  b_j\right\Vert^2_{C^{-1}}$.}

\tc{black}{Since the stationary distribution is not known, it is not possible to directly estimate $A$, $b$, and $C$. However, as we explain next, the policy evaluation problem allows correlated sampling according to a Markov process.}

	\vspace*{-2ex}
\subsection{Standard stochastic primal-dual algorithm}
\label{sec.spd}

When i.i.d.\ samples from the stationary distribution $\Pi$ are available and a centralized controller exists, the stochastic approximation method can be used to compute the solution to~\eqref{eq.main} with a convergence rate $O(1/\sqrt T)$ in terms of the primal-dual gap~\cite{nemirovski2009robust}. The stochastic primal-dual algorithm generates two pairs of {vectors $(x'(t),y'(t))$ and $(x(t),y(t))$} that are contained in $\mc X \times\mc Y^N$, where $t$ is a positive integer. At iteration $t$, the primal-dual updates are given by
\be
\label{eq.mrsa}
\ba{rcl}
x'(t+1)  & \!\!  = \!\!  & x'(t) \, - \, \eta(t) \,G_x(x(t),y(t);\xi_t)
\\[0.1cm]
x(t+1) & \!\!  = \!\!  & \mc P_{\mc X}( \,x'(t+1) \,) 
\\[0.1cm]
y'(t+1) & \!\!  = \!\!  & y'(t) \, + \, \eta(t) \,G_y(x(t),y(t);\xi_t)
\\[0.1cm]
 y(t+1) & \!\!  = \!\!  &\mc P_{\mc Y^N}( \, y'(t+1) \,)
\ea
\ee
where $\eta(t)$ is a non-increasing sequence of stepsizes, \tc{black}{$\mathcal P_{\mc X} (x') \DefinedAs \argmin_{x\,\in \, \mc X} \Norm{x - x'}$ and $\mathcal P_{\mc Y^N} (y') \DefinedAs \argmin_{y\,\in \, \mc Y^N}$ $\Norm{y - y'}$ are Euclidean projections onto $\mc X$ and $\mc Y^N$, and the sampled gradients are given by}
	\[
	\ba{rcl}
	G_x(x(t),y(t);\xi_t) 
	& \!\!\! = \!\!\! & 
	\nabla_x\Psi( x(t),y(t) ;\xi_t)
	\\ 
	G_y(x(t),y(t);\xi_t) 
	& \!\!\! = \!\!\! & 
	\nabla_y\Psi( x(t),y(t) ;\xi_t)
	\ea
	\] 
In our multi-agent MDP setup, however, each agent receives samples $\xi_t$ from a Markov process whose state distribution at time $t$ is $P_t$, where $P_t$ converges to the unknown distribution $\Pi$ with a geometric rate. Thus, i.i.d.\ samples from the stationary distribution $\Pi$ are not available. \tc{black}{Since i.i.d.\ sampling-based convergence guarantees may not hold for correlated samples~\cite{gyorfi1996averaged}, it is important to examine the ergodic stochastic optimization scenario in which samples are taken from a stochastic process~\cite{duchi2012ergodic}; a recent application for the centralized GTD can be found in~\cite{wang2017finite}. In particular, we are interested} in designing and analyzing distributed algorithms for stochastic saddle point problem~\eqref{eq.main} in the ergodic setting. 

	\vspace*{-2ex}
\section{Main result}
\label{sec.alg}

We now present the main results of the paper: a fast algorithm for the multi-agent learning. We propose a distributed stochastic primal-dual algorithm in Section~\ref{sec.DHPD}, introduce underlying assumptions in Section~\ref{sec.assumptions}, and \tc{black}{establish a finite-time performance bound} in Section~\ref{sec.error}.

	\vspace*{-2ex}
\subsection{\tc{black}{Distributed homotopy primal-dual algorithm}}
	\label{sec.DHPD}

In this section, we extend stochastic primal-dual algorithm~\eqref{eq.mrsa} to the multi-agent learning setting. To solve the stochastic saddle point program~\eqref{eq.main} in a distributed manner, the algorithm maintains $2N$ primal-dual pairs of vectors $z_{j,k}(t) \DefinedAs (x_{j,k}(t),y_{j,k}(t))$ and $z'_{j,k}(t) \DefinedAs (x'_{j,k}(t),y'_{j,k}(t))$, which belong to $\mc X \times\mc Y$. \tc{black}{In the $k$th iteration round at time $t$,} the $j$th agent computes local gradient using the private objective $\Psi_j( z_{j,k}(t);\xi_{k,t})$,
\[
G_j(z_{j,k}(t);\xi_{k,t}) 
\, \DefinedAs \,
\left[
\ba{c}
G_{j,x} (z_{j,k}(t);\xi_{k,t}) 
\\[0.cm]
G_{j,y} (z_{j,k}(t);\xi_{k,t})
\ea
\right]
\] 
and receives the vectors $\{x_{i,k}'(t),i\in {\mc N}_j\}$ from its neighbors ${\mc N}_j$. Here, $G_{j,x} (z_{j,k}(t);\xi_{k,t})$ and $G_{j,y} (z_{j,k}(t);\xi_{k,t}) $ are gradients of $\Psi_j( z_{j,k}(t);\xi_{k,t})$ with respect to $x_{j,k}(t)$ and $y_{j,k}(t)$, respectively.

The primal iterate $x_{j,k}(t)$ is updated using a convex combination of the vectors $\{x_{i,k}'(t),i \in {\mc N}_j \}$ and the dual iterate $y_{j,k}(t)$ is modified using the mirror descent update. We model the convex combination as a mixing process over the graph $\mc G$ and assume that the mixing matrix $W$ is a doubly stochastic, 
\be
\non
\ba{rcl}
\displaystyle\sum_{i \, = \, 1}^N W_{ij} 
	& \!\!  = \!\!  & 
	\displaystyle\sum_{i \, \in \, {\mc N}_j} \! W_{ij} \, = \, 1, \text{ for all } j \in \mc V
	\\[0.2cm]
	\displaystyle\sum_{j \, = \, 1}^N W_{ij} 
	& \!\!  = \!\! & 
	\displaystyle\sum_{j \, \in \, {\mc N}_i} \! W_{ij} \, = \, 1, \text{ for all } i \in \mc V
\ea
\ee
where $W_{ij} >0$ for $(i,j)\in\mc E$. For a given learning rate $\eta_k$, each agent updates primal and dual variables according to Algorithm~\ref{alg.main}, where $\mathcal P_{\mc X}(\,\cdot\,)$ and $\mathcal P_{\mc Y}(\,\cdot\,)$ are Euclidean projections onto domains $\mc X$ and ${\mc Y}^{N}$.

The iteration counters $k$ and $t$ are used in our Distributed Homotopy Primal-Dual (DHPD) Algorithm, i.e., Algorithm~\ref{alg.main}. \tc{black}{The homotopy approach varies certain parameter for multiple rounds, where each round takes an estimated solution from the previous round as a starting point. We use the learning rate as a homotopy parameter in our algorithm. At the initial round $k=1$, problem~\eqref{eq.main} is solved with a large learning rate $\eta_1$ {and,} in subsequent iterations, the learning rate is gradually decreased until a desired error tolerance is reached. }

For a fixed learning rate $\eta_k$, we employ the distributed stochastic primal-dual method to solve~\eqref{eq.main} and obtain an approximate solution that is given by a time-running average of primal-dual pairs,
	\be
	\label{eq.avout}
	\hat{x}_{j,k} 
	\, \DefinedAs \, 
	\dfrac{1}{T_{k}} \, \sum_{t \, = \, 1}^{T_{k}} x_{j,k}(t),
	~
	\hat{y}_{j,k} 
	\, \DefinedAs \, 
	\dfrac{1}{T_{k}} \, \sum_{t \, = \, 1}^{T_{k}}y_{j,k}(t).
\ee
\tc{black}{These are used as initial points for the next learning rate $\eta_{k+1}$. At round $k$, each agent $j$ performs primal-dual updates with $T_k$ iterations, indexed by time $t$. At next round $k+1$, we initialize primal and dual iterations using the previous approximate solutions $\hat{x}_{j,k}$ and $\hat{y}_{j,k}$, reduce the learning rate by half, $\eta_{k+1}=\eta_k/2$, and double the number of the inner-loop iterations, $T_{k+1}=2T_k$. The number of inner iterations in the $k$th round is $T_k$ and the number of total rounds is $K$.}

\tc{black}{The homotopy approach not only provides outstanding practical performance but it also facilitates an effective iteration complexity analysis~\cite{xiao2013proximal}. In particular, for stochastic strongly convex programs, the rate faster than $O(1/\sqrt T)$ was established in~\cite{tsianos2012distributed,yang2015rsg}; other fast rate results can be found in~\cite{xu2016homotopy,wei2018solving}. To the best of our knowledge, we are the first to show that the homotopy approach can be used to solve distributed stochastic saddle-point problems with convergence rate better than $O(1/\sqrt T)$.}

\begin{algorithm}
	\caption{Distributed Homotopy Primal-Dual Algorithm}
	\label{alg.main}
	\textbf{Initialization:} $x_{j,1}(1) = x_{j,1}'(1) = 0$, $y_{j,1}(1) = y_{j,1}'(1) = 0$ for all $j\in\mathcal{V}$; $T_1$, $\eta_1$, $K$ 
	\\[0.2cm]
	\textbf{For} $k = 1$ to $K$ \textbf{do} 
	\\
	\vspace{-4mm}
	\begin{enumerate}
		\item 
		\textbf{For} $t = 1$ to $T_k-1$ \textbf{do} \Comment{All $j\in\mathcal{V}$}
		\\[-0.25cm]
		\begin{itemize}
			\item Primal update, \Comment{Distributed Averaging}
		\end{itemize}	
			\[
			\begin{array}{lll}
			x_{j,k}'(t+1) 
			& \!\!  = \!\!  & 
			\displaystyle\sum_{i \, = \, 1}^N W_{ij}\, x_{i,k}'(t) 
			\; - \; 
			\eta_k \,G_{j,x}(x_{j,k} (t),y_{j,k}(t);\xi_{k,t})
			\\[0.2cm]
			x_{j,k}(t+1) 
			& \!\! = \!\! & 
			\mc P_{\mc X}( \, x_{j,k}'(t+1) \, )
			\end{array}
			\]
			\begin{itemize}
			\item Dual update, \Comment{Local Update}
			\end{itemize}
			\vspace*{0.2cm}
			\[
			\begin{array}{lll}
			y_{j,k}'(t+1) 
			& \!\!  = \!\!  & 
			y_{j,k}'(t) \,+\, \eta_k \,G_{j,y}(x_{j,k}(t),y_{j,k}(t);\xi_{k,t})
			\\[0.1cm]
			y_{j,k}(t+1) 
			& \!\!  = \!\!  & 
			\mc P_{\mc Y}( \,y_{j,k}'(t+1)\, )
			\end{array}
			\]
		\textbf{end for}
		\vspace*{0.2cm}
		\item Restart initialization, \Comment{All $j\in\mathcal{V}$}
		\[
		\ba{rcl}
		\big( \,x_{j,k+1}(1),\, y_{j,k+1} (1)\,\big) & \!\!  = \!\!  &   (\,\hat{x}_{j,k},\, \hat{y}_{j,k}\,) \text{ (see~\eqref{eq.avout}) }
		\\[0.1cm]
		\big( \,x_{j,k+1}'(1),\, y_{j,k+1}'(1)\,\big) & \!\!  = \!\!  &  (\,x_{j,k+1}(1),\, y_{j,k+1}(1)\,)
		\ea
		\]
		\item Update stepsize,\,horizon: $\eta_{k+1} \,=\, \tfrac{1}{2}\,\eta_k,\,T_{k+1} \,=\, 2\,T_k$
	\end{enumerate}
	\vspace*{0.2cm}
	\textbf{end for}
	\\[0.2cm]
	\textbf{Output:} $(\hat{x}_{j,K}, \hat{y}_{j,K})$ for all $j\in\mathcal{V}$
\end{algorithm}

\tc{black}{In Section~\ref{sec.error}, we use the primal optimality gap, 
\begin{equation}\label{eq:objgap}
	\text{Err}(\hat{x}_{i,k}) 
	\, \DefinedAs \,
	\dfrac{1}{N} \sum_{j \, = \, 1}^{N} \left( f_j(\hat{x}_{i,k})- f_j(x^\star) \right)
\end{equation}
to quantify the distance of the running local average, $\hat{x}_{i,k}\DefinedAs (1 / T_{k})  \sum_{t = 1}^{T_k} x_{i,k}(t)$, for the $i$th agent from the optimal solution $x^\star$. The primal optimality gap measures performance of each agent in terms of MSPBE which is described by the global objective function~\eqref{eq.mspbe}, or equivalently by~\eqref{eq.mspbesum}. }

\begin{remark}[Multi-agent policy evaluation]
	{\color{black} For the multi-agent policy evaluation problem, we take $\psi_j (x, y_j) = y_j^T(Ax-  b_j)  - \tfrac{1}{2} \, y_j^TCy_j$. Since $\psi_j$ depends linearly on $A$, $b_j$, and $C$, by replacing expectations in~\eqref{eq.AbC} with the corresponding samples we obtain unbiased gradients,
		\[
		\ba{rcl}
		G_{j,x} (z_{j,k}(t);\xi_{k,t})  
		& \!\!\! = \!\!\! & 
		(\phi(s) -\gamma\phi(s') ) \phi(s)^T y_{j,k}(t)
		\\[0.1cm]
		G_{j,y} (z_{j,k}(t);\xi_{k,t}) 
		& \!\!\! = \!\!\! &  
		\phi(s) (\phi(s) -\gamma\phi(s') )^T x_{j,k}(t) 
		- 
		{R}_j^{\pi}(s)\phi(s) - \phi(s)\phi(s)^T y_{j,k}(t).
		\ea
		\]
		where $s$, $s'$ are two consecutive states that evolve according to the underlying Markov process $\xi_{k,t}$ indexed by time $t$ and the iteration round $k$. In each iteration, Algorithm~\ref{alg.main} requires $O(dN^2)$ operations where $d$ is the problem dimension (or the feature dimension in linear approximation) and $N$ is the total number of agents. For a single-agent problem, $O(d)$ operations are required which is consistent with GTD algorithm~\cite{sutton2009fast}.}
		\end{remark}

	\vspace*{-2ex}
\subsection{Assumptions}\label{sec.assumptions}

\tc{black}{We now formally state assumptions required to establish our main result in Theorem~\ref{thm.main} that quantifies finite-time performance of Algorithm~\ref{alg.main} for stochastic primal-dual optimization problem~\eqref{eq.main}.}

\begin{assumption}[Convex compact domain]\label{as:domain}
	The feasible sets $\mc X$ and $\mc Y$ contain the origin in $\mathbb{R}^d$ and they are convex and compact with radius $r> 0$, i.e., \mbox{$\sup_{x \, \in \, {\mathcal X}, \, y \, \in \, {\mc Y}}\|(x,y)\|^2\leq r^2$.} 
\end{assumption}  

\begin{assumption}[Convexity and concavity]\label{as:basic}
	The function $\psi_j(x,y_j)$ in~\eqref{eq.main} is convex in $x$ for any fixed $y_j\in\mc Y$, and is strongly concave in $y_j$ for any fixed $x\in \mc X$, i.e., for any $x,x'\in\mc X$ and $y_j,y_j'\in \mc Y$, there exists $L_y>0$ such that
	\[
	\begin{array}{rcl}
	\psi_j(x,y_j) & \!\! \geq \!\! & \psi_j(x',y_j) \; +  \; \dotp{\nabla_x\psi_j (x',y_j)}{x - x'}
	\\[0.1cm]
	\psi_j(x,y_j) & \!\! \leq \!\! & \psi_j(x,y_j') \; - \; \dotp{\nabla_y\psi_j(x,y_j')}{y_j - y_j'} \; - \; \dfrac{L_y}{2} \, \|y_j - y_j'\|^2.
	\end{array}
	\]
	Moreover, $f_j(x) \DefinedAs \max_{y_j \in \mc Y} \psi_j (x, y_j)$ is strongly convex, i.e., for any $x,x' \in\mc X$, there exists $L_x>0$ such that
	\[
	f_j(x) 
	\, \geq \,
	f_j(x') \,+\, \dotp{\nabla f_j(x')}{x-x'} \,+\, \dfrac{L_x}{2} \, \Vert x - x' \Vert^2.
	\]
\end{assumption}

\begin{assumption}[Bounded gradient]\label{as:var}
	For any $t$ and $k$, there is a positive constant $c$ such that the sample gradient $G_j(x,y_j;\xi_{k,t})$ satisfies,
	\[
			\Vert G_j(x,y_j;\xi_{k,t}) \Vert 
			\, \leq  \,
			c,
			~ 
			\mbox{for~all}~x \, \in \, {\mc X}, ~ y_j \, \in \, {\mc Y}
	\]
	with probability one.
\end{assumption}

	\begin{remark}
	\tc{black}{Jensen's inequality can be combined with Assumption~\ref{as:var} to show that the population gradient is also bounded, i.e., $\Vert g_j(x,y_j)\Vert \leq c$, for all $x\in\mc X$ and $y_j\in \mc Y$, where 
	\[
	g_j(x,y_j)
	\, \DefinedAs \,
	\left[
	\ba{c}
	g_{j,x}( x,y_j )
	\\[0.cm]
	g_{j,y}( x,y_j )
	\ea
	\right]
	\, = \,
	\left[
	\ba{c}
	\nabla_x\psi_j(x,y_j)
	\\[0.cm]
	\nabla_{y}\psi_j(x,y_j)
	\ea
	\right].
	\]}
\end{remark}
\begin{assumption}[Lipschitz gradient]\label{as:grad}
	For any $t$ and $k$, there exists a positive constant $L$ such that for any $x,x'\in\mc X$ and $y_j,y_j'\in\mc Y$, we have 
	\[
		\begin{array}{rcl}
			\Vert G_j(x,y_j;\xi_{k,t}) - G_j(x',y_j;\xi_{k,t}) \Vert &\!\!  \leq \!\! & L\, \Vert x - x' \Vert
			\\[0.1cm]
			\Vert G_j(x,y_j;\xi_{k,t}) - G_j(x,y_j';\xi_{k,t}) \Vert & \!\!  \leq \!\!  & L\, \Vert y_j - y_j' \Vert
		\end{array}
	\]
	with probability one.
\end{assumption}

We also recall some important concepts from probability theory. The total variation distance between distributions $P$ and $Q$ on a set 
$\Xi \subset \mathbb{R}^{|S|}$ is given by
	\beq
	\ba{rrl}
	d_{\text{tv}} (P,Q) 
	& \!\!  \DefinedAs \!\!  & 
	\displaystyle{\int_{\Xi}} \vert p(\xi) - q(\xi) \vert \, \mathrm{d} \mu(\xi) 
	\; = \;
	2 \displaystyle{\sup_{{E} \, \subset \, \Xi}}
	\,
	\vert P({E}) - Q({E}) \vert
	\ea
	\non
	\eeq
where distributions $P$ and $Q$ (with densities $p$ and $q$) are continuous with respect to the Lebesgue measure $\mu$, and the supremum is taken over all measurable subsets of $\Xi$. 

We use the notion of mixing time to evaluate the convergence speed of a sequence of probability measures generated by a Markovian process to its (unique) stationary distribution $\Pi$, whose density $\pi$ is assumed to exist. Let $\mc F_{k,t}$ be the $\sigma$-field generated by the first $t$ samples at round $k$, $\{ \xi_{k,1},\ldots,\xi_{k,t} \}$, drawn from $\{ P_{k,1},\ldots,P_{k,t} \}$, where $P_{k,t}$ is the probability measure of the Markovian process at time $t$ and round $k$. Let $P_{k,t}^{[s]}$ be the distribution of $\xi_{k,t}$ conditioned on $\mathcal{F}_{k,s}$ (i.e., given samples up to time slot $s$, $\{ \xi_{k,1},\ldots,\xi_{k,s} \}$) at round $k$, whose density $p_{k,t}^{[s]}$ also exists. The mixing time for a Markovian process is defined as follows~\cite{duchi2012ergodic}.

\begin{definition}\label{def.mixing}
	The total variation mixing time $\tau_{\normalfont\text{tv}}(P_k^{[s]},\varepsilon)$ of the Markovian process conditioned on the $\sigma$-field of the initial $s$ samples $\mathcal{F}_{k,s}=\sigma(\xi_{k,1},\ldots,\xi_{k,s})$ is the smallest positive integer $t$ such that $d_{\normalfont\text{tv}}(P_{k,s+t}^{[s]},\Pi) \leq \varepsilon$,
	\[
	\tau_{\normalfont\text{tv}}(P_k^{[s]},\varepsilon) 
	= 
	\inf
	\bigg\{ t-s \, \Big| \,  t\in\mathbb{N},\int_{\Xi} \vert p_{k,t}^{[s]}(\xi) - \pi(\xi) \vert \, \mathrm{d}\mu(\xi) \leq\varepsilon \bigg\}.
	\]
\end{definition}

The mixing time $\tau_{\normalfont\text{tv}}(P_k^{[s]},\varepsilon)$ measures the number of additional steps required until the distribution of $\xi_{k,t}$ is within $\varepsilon$ neighborhood of the stationary distribution $\Pi$ given the initial $s$ samples, $\{ \xi_{k,1},\ldots,\xi_{k,s} \}$.

\begin{assumption}\label{as.mc_general}
\tc{black}{The underlying Markov chain is irreducible and aperiodic, i.e., there exists $\Gamma> 0$ and $\rho\in(0,1)$ such that  $\mathbb{E}[\,d_{\normalfont\text{tv}} (P_{k,t+\tau}^{[t]},\Pi )\,]\leq \Gamma\rho^\tau$ for all $\tau\in\mathbb{N}$ and all $k$.}
\end{assumption}
\tc{black}{Furthermore, we have
\begin{equation}\label{eq:mixing-time}
	\tau_{\normalfont\text{tv}}(P_k^{[s]},\varepsilon) 
	\, \geq \,
	\l \lceil \dfrac{\log \, (\Gamma/\varepsilon)}{\lvert \log \, \rho \rvert}\r\rceil + 1, 
	~
	\mbox{for~all}~k, \, s \, \in \, {\mathbb N}
\end{equation}
where $\lceil\,\cdot\,\rceil$ is the ceiling function and $\varepsilon \leq \Gamma$ specifies the distance to the stationarity; also see~\cite[Theorem~4.9]{levin2017markov}. }

		\vspace*{-2ex}
\subsection{\color{black}Finite-time performance bound}
	\label{sec.error}

For stochastic saddle point problem~\eqref{eq.main}, we establish \tc{black}{a finite-time error bound} in terms of the average primal optimality gap in Theorem~\ref{thm.main} where the total number of iterations in Algorithm~\ref{alg.main} is given by $T\DefinedAs\sum_{k \, = \, 1}^{K}T_k = (2^K-1)T_1$.

\begin{theorem}\label{thm.main}
	{\color{black}Let Assumptions~\ref{as:domain}--\ref{as.mc_general} hold.} Then, for any $\eta_1\geq 1/(4/L_y+2/L_x)$ and any $T_1$ and $K$ that satisfy
	\begin{equation}
	\label{eq.kt}
	T_1
	\, \geq \,
	\tau 
	\, \DefinedAs \,
	\l\lceil\,\dfrac{\log \, (\Gamma T) }{|\log\rho|}\,\r\rceil 
	\, + \,
	1
	\end{equation}
	the output $\hat{x}_{j,K}$ of Algorithm~\ref{alg.main} provides the solution to problem~\eqref{eq.main} with the following upper bound  
	\be
	\dfrac{c(r L + c)}{T}
	\left(
	\dfrac{ C_1 \log^2 ( T\sqrt{N})}{1 - \sigma_2(W)} 
	\, + \,
	C_2 (1 + T_1)
	\right)
	\label{eq.bound}
	\ee
\tc{black}{on $(1/N) \sum_{j = 1}^{N} \mathbb{E}[\,\text{Err}(\hat{x}_{j,K})\,]$, where the primal optimality gap $\text{Err}(\hat{x}_{j,K})$ is defined in~\eqref{eq:objgap}, $r$ is the bound on feasible sets $\mc X$ and $\mc Y$ in Assumption~\ref{as:domain}, $c$ is the bound on sample gradients in Assumption~\ref{as:var},} $C_1$ and $C_2$ are constants independent of {$T$}, $\sigma_2(W)$ is the second largest eigenvalue of $W$, and $N$ is the total number of agents.
\end{theorem}
\begin{remark}[MSPBE minimization for multi-agent policy evaluation]
	\tc{black}{For $\psi_j (x, y_j) =  y_j^T ( Ax-  b_j)- \frac{1}{2}  y_j^T  C y_j $, Assumption~\ref{as.AC} guarantees that $A$ is full rank and that $C$ is positive definite. Since all features and rewards are bounded, Assumptions~\ref{as:basic}-\ref{as:grad} hold with
	\[
		\ba{rcl}
		L_x 
		& \!\!\! = \!\!\! & 
		\sigma_{\max}(A^T A)/\sigma_{\min}(C),
		~
		L_y \, = \, \sigma_{\min}(C)
	\\[0.1cm]
	c 
	& \!\!\! \geq \!\!\! & 
	\sqrt{(2\beta_1^2 + \beta_2^2) r^2 +\beta_0^2}
	\\[0.11cm]
	L 
	& \!\!\! \geq \!\!\! & 
 	\max(\sqrt{\beta_1^2+\beta_2^2},\beta_1)
	\ea
	\]
	where $\beta_0, \beta_1$ and $\beta_2$ provide upper bounds to
	$\Vert {R}_j^{\pi}(s)\phi(s)\Vert \leq \beta_0$, 
	$\Vert \phi(s) (\phi(s)-\gamma\phi(s'))^T \Vert \leq \beta_1$, and 
	$\Vert \phi(s) \phi(s)^T \Vert \leq \beta_2$. The unique minimizer of~\eqref{eq.mspbe} and the expression~\eqref{eq.fj-new} that results from Fenchel duality validate Assumption~\ref{as:domain} with
	\[
	r 
	\,\geq\, 
	\dfrac{2\beta_0}{\sigma_{\min}(C)}
	\left(
	\dfrac{\beta_1^2 \sigma_{\max}(C)}{\sigma_{\min}(A^T A)\sigma_{\min}(C)} 
	\, + \, 
	1
	\right).
	\]
	In practice, when some prior knowledge about the model is available, e.g., when generative models or simulators can be utilized, samples from a near stationary state distribution under a given policy can be used to estimate these parameters.  }
\end{remark}

\begin{remark}[Optimal performance bound and selection of parameters]
	\tc{black}{As long as \tc{black}{$T>\tau =\lceil\,{\log \, (\Gamma T) }/{|\log\rho|}\,\rceil$}, we can find $T_1$ and $K$ such that condition~\eqref{eq.kt} holds. In particular, choosing $T_1 = \tau$ and $K = \log(1+T/\tau)$ gives the desired $O(\log^2 (T \sqrt{N} )/T)$ scaling of finite-time performance bound~\eqref{eq.bound}. In general, to satisfy condition~\eqref{eq.kt}, we can choose $T_1$ and $K$ such that $T_1\geq \l\lceil\,({K + \log \, (\Gamma T_1) })/{|\log\rho|}\,\r\rceil +1$. Hence, performance bound~\eqref{eq.bound} scales as $O((K^2 + \log^2T_1)/T)$. Time-running average~\eqref{eq.avout} is used as the output of our algorithm and when the algorithm is terminated in an inner loop $K$, the time-running average from previous inner loop can be used as an output and our performance bound holds for $K-1$.}
\end{remark}

\begin{remark}[Mixing time]
	\tc{black}{When $\varepsilon = 1/T$, $\tau=\lceil\,{\log \, (\Gamma T) }/{|\log\rho|}\,\rceil $ provides a lower bound on the mixing time (cf.~\eqref{eq:mixing-time}), where $\Gamma> 0$ and $\rho\in(0,1)$ are given in Assumption~\ref{as.mc_general}. Thus, performance bound~\eqref{eq.bound} in Theorem \ref{thm.main} depends on how fast the process $P_k^{[s]}$ reaches $1/T$ mixing via $\tau$. In particular, when samples are independent and identically distributed, $0$-mixing is reached in one step, i.e., we have $\tau =1$ and $\epsilon = 0$. Hence, by setting $T_1=1$, performance bound~\eqref{eq.bound} simplifies to $O(\log^2 (T)/T)$.}
\end{remark}

\begin{remark}[Network size and topology]
	In \tc{black}{finite-time performance bound~\eqref{eq.bound},} the dependence on the network size $N$ and the spectral gap $1-\sigma_2(W)$ of the mixing matrix $W$ is quantified by ${\log^2( T \sqrt{N})}/{(1-\sigma_2(W))}$. We note that $W$ can be expressed using the Laplacian $L$ of the underlying graph, $W = I - D^{1/2}L D^{1/2}/(\delta_{\max}+1)$, where $D \DefinedAs \diag \, (\delta_1,\ldots,\delta_N)$, $\delta_i$ is the degree of node $i$, and $\delta_{\max} \DefinedAs \max_i \delta_i$. The algebraic connectivity of the network $\lambda_{N-1}(L)$, i.e., the second smallest eigenvalue of $L$, can be used to bound $\sigma_2(W)$. In particular, for a ring with $N$ nodes, we have $\sigma_2(W) = \Theta(1/N^2)$; for other topologies, see~\cite[Section~6]{duchi2012dual}.
\end{remark}

		\vspace*{-2ex}
\section{\tc{black}{Finite-time performance} analysis: proofs}
\label{sec.convergence}

In this section, we \tc{black}{study finite-time performance} of the distributed homotopy primal-dual algorithm described in Algorithm~\ref{alg.main}. We define auxiliary quantities in Section~\ref{sec.pre}, present useful lemmas in Section~\ref{sec.lem}, and provide the proof of Theorem~\ref{thm.main} in Section~\ref{sec.thm}.  

		\vspace*{-2ex}
\subsection{Setting up the analysis}
\label{sec.pre}

	\tc{black}{We first introduce three types of averages that are used to describe sequences generated by the primal update of~Algorithm~\ref{alg.main}.} The average value of $x_{j,k}(t)$ over all agents is denoted by $\bar{x}_{k}(t) \DefinedAs \frac{1}{N}\sum_{j = 1}^{N}x_{j,k}(t)$, the time-running average of $x_{j,k}(t)$ is $\hat{x}_{j,k} \DefinedAs \frac{1}{T_k}\sum_{t = 1}^{T_k}x_{j,k}(t)$, the averaged time-running average of $x_{j,k}(t)$ is given by $\tilde{x}_k \DefinedAs \frac{1}{N} \sum_{j = 1}^N \hat{x}_{j,k} = \frac{1}{T_k} \sum_{t = 1}^{T_k}\bar{x}_{k}(t)$, and two auxiliary averaged sequences are, respectively, given by
	$
\bar{x}_{k}'(t) \DefinedAs \frac{1}{N} \sum_{j = 1}^{N}x_{j,k}'(t)
	$ 
and \tc{black}{$\underbar{$x$}_{k}(t)  \DefinedAs  P_{\mc X}( \bar{x}_{k}'(t) )$.}
Since the mixing matrix $W$ is doubly stochastic, the primal update $\bar{x}_{k}'(t)$ has a simple `centralized' form,
\begin{equation}\label{eq:simple-update}
\overline x_{k}'(t+1) 
\,=\,
 \bar{x}_{k}'(t) -  \dfrac{\eta_k}{N}\displaystyle\sum_{j\,=\,1}^N G_{j,x}(z_{j,k}(t);\xi_{k,t}).
\end{equation}
\tc{black}{We now utilize the approach similar to the network averaging analysis in~\cite{duchi2012dual} to quantify how well the agent $j$ estimates the network average at round $k$.}

\begin{lemma}
	\label{lem.netavearaging}
	Let Assumption~\ref{as:var} hold, let $W$ be a doubly stochastic mixing matrix over graph $\mc G$, let $\sigma_2(W)$ denote its second largest singular value, and let a sequence $x_{j,k}(t)$ be generated by Algorithm~\ref{alg.main} for agent $j$ at round $k$. Then,
	\begin{equation}
	\non
		\dfrac{1}{T_k}\displaystyle\sum_{t\,=\,1}^{T_k} \mathbb{E}[\,\Vert x_{j,k}(t) -\bar{x}_{k}(t) \Vert\,]
		\,\leq\, 
		{\color{black}2\Delta_k}
	\end{equation}
	where $\Delta_k$ is given by
	\[
	\dfrac{2\eta_k {\color{black}c} \log(\sqrt{N} T_k)}{1-\sigma_2(W)}
	\; + \; \dfrac{4 {\color{black}c}}{T_k} \l(\dfrac{\log(\sqrt{N}T_k)}{1-\sigma_2(W)} \; + \; 1\r)\displaystyle\sum_{l\,=\,1}^{k}\eta_l T_{l}  \; + \; 2\eta_k {\color{black}c}
	\]
\end{lemma}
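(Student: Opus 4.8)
The plan is to reprove the network-disagreement bound along the lines of distributed dual averaging~\cite{duchi2012dual}, but keeping explicit track of the disagreement that the homotopy restart in Algorithm~\ref{alg:new-alg} inherits from the previous round. \textbf{First}, since the Euclidean projection $\mathcal P_{\mc X}$ is nonexpansive, $\|x_{j,k}(t)-\bar x_k(t)\|=\|\mathcal P_{\mc X}(x'_{j,k}(t))-\mathcal P_{\mc X}(\bar x'_k(t))\|\le\|x'_{j,k}(t)-\bar x'_k(t)\|$, and a further triangle-inequality step bounds the (harmless) gap between $\mathcal P_{\mc X}(\bar x'_k(t))$ and $\tfrac1N\sum_j\mathcal P_{\mc X}(x'_{j,k}(t))$ by $\tfrac1N\sum_j\|x'_{j,k}(t)-\bar x'_k(t)\|$, so both readings of $\bar x_k(t)$ are covered. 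Hence it suffices to control $\tfrac1{T_k}\sum_{t=1}^{T_k}\mathbb E\|x'_{j,k}(t)-\bar x'_k(t)\|$ for each agent $j$.

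\textbf{Second}, I would unroll the primal recursion $x'_{j,k}(t+1)=\sum_i W_{ij}x'_{i,k}(t)-\eta_k G_{j,x}(z_{j,k}(t);\xi_{k,t})$ from $t=1$, giving $x'_{j,k}(t)=\sum_i[W^{t-1}]_{ij}x'_{i,k}(1)-\eta_k\sum_{s=1}^{t-1}\sum_i[W^{t-1-s}]_{ij}G_{i,x}(z_{i,k}(s);\xi_{k,s})$, and subtract the centralized average from~\eqref{eq:simple-update}. Since $\sum_i([W^m]_{ij}-\tfrac1N)=0$, the difference $x'_{j,k}(t)-\bar x'_k(t)$ is a linear combination of the centered mixing weights $[W^m]_{\cdot j}-\mathbf 1/N$ applied to the initial disagreement $\{x'_{i,k}(1)-\bar x'_k(1)\}_i$ and to the sampled gradients.

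\textbf{Third}, because $W$ is doubly stochastic with second largest singular value $\sigma_2(W)$, we have $\|[W^m]_{\cdot j}-\mathbf 1/N\|_2\le\sigma_2(W)^m$ and hence $\|[W^m]_{\cdot j}-\mathbf 1/N\|_1\le\min\{2,\sqrt N\,\sigma_2(W)^m\}$. Summing this mixing tail over $m\ge0$ — splitting at $m\asymp\log(\sqrt N\,T_k)/\log(1/\sigma_2(W))$ and using $\log(1/\sigma_2(W))\ge1-\sigma_2(W)$ — yields the factor $O\!\big(\log(\sqrt N T_k)/(1-\sigma_2(W))\big)$. Applying this to the gradient part of Step~2, with every sampled gradient bounded by $G$ (Assumption~\ref{as:var}), and averaging over $t=1,\dots,T_k$, gives the leading term $2\eta_kG\log(\sqrt N T_k)/(1-\sigma_2(W))$ of $\Delta_k$; the stray $2\eta_kG$ collects the $s=t-1$ boundary contribution together with the projection mismatch from Step~1.

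\textbf{Fourth}, the only remaining contribution is the one driven by the initial disagreement $\max_i\|x'_{i,k}(1)-\bar x'_k(1)\|$. By the restart rule, $x'_{i,k}(1)=\hat x_{i,k-1}$ and $\bar x'_k(1)=\tilde x_{k-1}$, so Jensen's inequality gives $\|x'_{i,k}(1)-\bar x'_k(1)\|\le\tfrac1{T_{k-1}}\sum_t\|x_{i,k-1}(t)-\bar x_{k-1}(t)\|$; taking expectations and invoking the present bound recursively on rounds $k-1,\dots,1$ (round $1$ starts from the origin, so its disagreement is $0$) makes round $\ell$ contribute a term proportional to $\eta_\ell T_\ell$ scaled by $\tfrac1{T_k}\big(\log(\sqrt N T_k)/(1-\sigma_2(W))+1\big)$, and summing over $\ell$ gives the middle term of $\Delta_k$. \emph{This cross-round accumulation is the crux}: in a single distributed dual-averaging run the initial disagreement vanishes, whereas each homotopy restart re-injects the previous round's disagreement, so one must check that the recursion does not compound — which it does not, since $\eta_\ell T_\ell$ is constant under the schedule $\eta_{\ell+1}=\eta_\ell/2$, $T_{\ell+1}=2T_\ell$. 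The Markovian dependence of $\xi_{k,t}$ does not enter this lemma, because Assumption~\ref{as:var} bounds the sampled gradients almost surely, irrespective of their law.
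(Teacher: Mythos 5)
Your Steps 1--3 (nonexpansiveness of the projection, unrolling the primal recursion against the centralized update~\eqref{eq:simple-update}, and the $\ell_1$ mixing bound yielding the terms $2\eta_k G\log(\sqrt N T_k)/(1-\sigma_2(W))$ and $2\eta_k G$) follow the paper's argument. The genuine gap is in Step 4, i.e., in how the restart term is handled. The paper does not control the restart \emph{disagreement} by invoking the lemma recursively; it controls the \emph{norm} of the restart point: since $0\in\mc X$, nonexpansiveness gives $\Vert x'_{i,k}(1)\Vert\le \frac{1}{T_{k-1}}\sum_{t}\Vert x'_{i,k-1}(t)\Vert$, and the unprojected recursion together with double stochasticity gives $\Vert x'_{i,k-1}(t)\Vert\le\sum_j [W^{t-1}]_{ji}\Vert x'_{j,k-1}(1)\Vert+2\eta_{k-1}T_{k-1}G$, which unrolls over rounds (using $x'_{j,1}(1)=0$) to $\Vert x'_{i,k}(1)\Vert\le 2G\sum_{l<k}\eta_l T_l$. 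Pairing this with $\sum_{t}\Vert [W^{t-1}]_{\cdot j}-\tfrac1N\mathbf 1\Vert_1\le 2\bigl(\log(\sqrt N T_k)/(1-\sigma_2(W))+1\bigr)$ is exactly what produces the middle term of $\Delta_k$: the factor $\sum_l\eta_l T_l$ comes from the drift of the \emph{unprojected} iterates within each earlier round, not from inter-agent disagreement at restart.

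Your recursive scheme does not deliver that term. Writing $D_k$ for the averaged expected disagreement at round $k$, your Step 4 yields $D_k\le A_k+c\,T_k^{-1}\bigl(\log(\sqrt N T_k)/(1-\sigma_2(W))+1\bigr)D_{k-1}$, where $A_k$ collects the two boundary terms. Unrolling this recursion gives, from round $\ell$, contributions of the form $\eta_\ell G$ times \emph{products} of factors $T_m^{-1}\bigl(\log(\sqrt N T_m)/(1-\sigma_2(W))+1\bigr)$, not the claimed $\eta_\ell T_\ell\, T_k^{-1}\bigl(\log(\sqrt N T_k)/(1-\sigma_2(W))+1\bigr)$; already the one-step term $T_k^{-1}(\cdots)\Delta_{k-1}$ is bounded by the stated middle term only if $\log(\sqrt N T_{k-1})/(1-\sigma_2(W))\lesssim T_{k-1}$, a relation between the spectral gap and the horizon that the lemma does not assume. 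Your remark that the recursion "does not compound because $\eta_\ell T_\ell$ is constant under the schedule" is not pertinent: the quantity your recursion propagates is $D_{k-1}\le\Delta_{k-1}$, whose leading part is $\eta_{k-1}G\log(\sqrt N T_{k-1})/(1-\sigma_2(W))$, not $G\,\eta_{k-1}T_{k-1}$, so the constancy of $\eta_\ell T_\ell$ never enters. To close the argument you need the paper's norm bound on the restart iterates (or a substitute such as the domain radius $R$ from Assumption~\ref{as:domain}, which, however, changes the form of $\Delta_k$).
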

	\begin{proof}
		See Appendix~\ref{sec.proof.netavearaging}.
	\end{proof}	

The dual update $y_{j,k}'(t+1)$ behaves in a similar way as~\eqref{eq:simple-update}. We utilize the following classical online gradient descent to analyze the behavior of $\bar{x}_{k}'(t)$ and $y_{j,k}'(t)$ under projections $\mathcal P_{\mc X}(\,\cdot\,)$ and $\mathcal P_{\mc Y}(\,\cdot\,)$. 

\begin{lemma}[\!\!\cite{Zinkevich03ICML}]\label{lem:online-bound}
	Let $\mathcal{U}$ be a convex closed subset of $\mathbb{R}^d$, let $\{g(t)\}_{t=1}^{T}$ be an arbitrary sequence in $\mathbb{R}^d$, and let sequences $w(t)$ and $u(t)$ be generated by the projection, $w(t+1) = w(t) - \eta g(t)$ and $u(t+1) = \mathcal{P}_{\mc U}(\,w(t+1) \,)$, 
	where $u(1)\in \mc U$ is the initial point and $\eta>0$ is the learning rate. Then, for any fixed $u^\star\in\mathcal{U}$, 
	\[
	\sum_{t\,=\,1}^{T}\dotp{g(t)}{u(t) - u^\star} 
	\,\leq\,
	 \frac{\Vert u(1) - u^\star\Vert^2}{2\eta} \,+\, \frac{\eta}{2}\sum_{t\,=\,1}^{T}\Vert g(t)\Vert^2.
	\]
\end{lemma}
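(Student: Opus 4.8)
The plan is to establish the bound by a standard potential-function (telescoping) argument applied to the unprojected iterates $w(t)$, which evolve by the simple recursion $w(t+1) = w(t) - \eta g(t)$. First I would introduce the ``lazy'' iterate and observe that, since $u(1) \in \mathcal U$ and the updates are lazy projections, one has $w(t+1) = w(1) - \eta \sum_{s=1}^{t} g(s)$; in particular $w(1) = u(1)$. The key inequality to exploit is the firm non-expansiveness of the Euclidean projection onto the convex closed set $\mathcal U$: for any $u^\star \in \mathcal U$ and any $v \in \mathbb R^d$, $\|\mathcal P_{\mc U}(v) - u^\star\|^2 \leq \|v - u^\star\|^2$. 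Applying this with $v = w(t+1)$ and $u(t+1) = \mathcal P_{\mc U}(w(t+1))$ gives $\|u(t+1) - u^\star\|^2 \leq \|w(t+1) - u^\star\|^2$.

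Next I would expand the squared distance of the unprojected iterate:
\[
\|w(t+1) - u^\star\|^2
\,=\, \|w(t) - u^\star\|^2 \,-\, 2\eta \dotp{g(t)}{w(t) - u^\star} \,+\, \eta^2 \|g(t)\|^2 .
\]
The difficulty here is that the cross term involves $w(t) - u^\star$ rather than $u(t) - u^\star$, which is what appears in the claimed bound; so a naive telescoping of this identity does not directly yield the statement. The cleanest route is therefore to work with the potential $\Phi(t) \DefinedAs \tfrac{1}{2\eta}\|w(t) - u^\star\|^2$ and to bound $\dotp{g(t)}{u(t) - u^\star}$ by splitting it as $\dotp{g(t)}{w(t) - u^\star} + \dotp{g(t)}{u(t) - w(t)}$; however, controlling $\dotp{g(t)}{u(t) - w(t)}$ is itself nontrivial. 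A more robust alternative — and the one I would actually carry out — is to telescope using the unprojected iterates throughout and only invoke the projection inequality at the final step, accepting a bound phrased in terms of $w(t)$, then argue that summing $\dotp{g(t)}{w(t)-u^\star}$ already gives the right-hand side after telescoping, and finally relate $\sum_t \dotp{g(t)}{u(t) - u^\star}$ to $\sum_t \dotp{g(t)}{w(t) - u^\star}$. Concretely: rearranging the expansion above and summing over $t = 1, \ldots, T$ telescopes the distance terms, yielding
\[
\sum_{t\,=\,1}^{T}\dotp{g(t)}{w(t) - u^\star}
\,\leq\, \frac{\|w(1) - u^\star\|^2 - \|w(T+1) - u^\star\|^2}{2\eta} \,+\, \frac{\eta}{2}\sum_{t\,=\,1}^{T}\|g(t)\|^2 ,
\]
and dropping the nonpositive term $-\|w(T+1)-u^\star\|^2/(2\eta)$ together with $w(1) = u(1)$ gives the bound with $w(t)$ in place of $u(t)$.

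The last step — and the one I would flag as the genuine subtlety — is replacing $w(t)$ by $u(t)$ in the left-hand side. The standard way to handle this (which is how Zinkevich's original argument proceeds) is in fact to run the telescoping argument on the projected iterates directly, using the inequality $\|u(t+1) - u^\star\|^2 \leq \|w(t+1) - u^\star\|^2 = \|u(t) - u^\star\|^2 - 2\eta\dotp{g(t)}{u(t) - u^\star} + \eta^2\|g(t)\|^2$, where the middle equality uses $w(t+1) = u(t) - \eta g(t)$. Here one must be careful: this last identity for $w(t+1)$ in terms of $u(t)$ holds only if the scheme is ``greedy'' projection ($w(t+1) = u(t) - \eta g(t)$), whereas the lemma states the ``lazy'' scheme $w(t+1) = w(t) - \eta g(t)$. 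For the lazy variant the correct and well-known observation is that the lazy projected iterates coincide with the output of follow-the-regularized-leader, so I would instead verify the bound via $w(t+1) = w(1) - \eta \sum_{s\le t} g(s) = \mathrm{argmin}_{w}\{\sum_{s\le t}\dotp{g(s)}{w} + \tfrac{1}{2\eta}\|w - u(1)\|^2\}$ and apply the standard FTRL regret inequality, or equivalently bound $\sum_t \dotp{g(t)}{u(t) - u^\star} \le \sum_t\dotp{g(t)}{w(t+1) - u^\star} + \tfrac{1}{2\eta}\|u(1)-u^\star\|^2$ and telescope. Either way the main obstacle is precisely this bookkeeping — correctly matching the lazy-projection recursion to a telescoping potential — after which the remaining steps (the projection non-expansiveness and dropping a nonpositive term) are routine.
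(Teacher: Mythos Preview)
The paper does not actually prove this lemma: its entire proof reads ``See Appendix B.3 in~\cite{Zinkevich03ICML}.'' So there is no in-paper argument to compare your proposal against.

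That said, your exploration is on the right track and, importantly, you correctly flag the genuine subtlety: the lemma is stated for the \emph{lazy} (dual-averaging) update $w(t+1)=w(t)-\eta g(t)$, not the greedy update $w(t+1)=u(t)-\eta g(t)$, so the one-line telescoping proof for standard projected OGD does not apply verbatim. Your identification of the lazy scheme with FTRL --- namely that $u(t+1)=\argmin_{u\in\mathcal U}\{\sum_{s\le t}\dotp{g(s)}{u}+\tfrac{1}{2\eta}\|u-u(1)\|^2\}$ --- is exactly the right way to proceed, and the standard FTRL regret analysis (be-the-leader plus a stability bound using the $1/\eta$-strong convexity of the regularizer) delivers the result. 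Your write-up is more of a survey of possible routes than a single clean argument; if you want to tighten it, commit to the FTRL derivation and drop the false starts. But substantively you have identified both the obstacle and the correct resolution, which is more than the paper itself provides.
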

\begin{proof}
	See Appendix B.3 in~\cite{Zinkevich03ICML}.
\end{proof}

If $\hat{y}_{j,k}^\star \DefinedAs \text{argmax}_{y_j\,\in\,\mc Y}\, \psi_j(\hat{x}_{i,k}, y_j)$, using Fenchel dual~\eqref{eq.fj-new}, we have $f_j(\hat{x}_{i,k}) = \psi_j(\hat{x}_{i,k},\hat{y}_{j,k}^\star)$ and $f_j(x^\star)=\psi_j(x^\star, y_{j}^\star)$. \tc{black}{This allows us to express primal optimality gap~\eqref{eq:objgap} in terms of a primal-dual objective,}
\begin{equation}\label{eq:objgap1}
\text{Err}(\hat{x}_{i,k}) 
\,=\,
\frac{1}{N}\sum_{j\,=\,1}^{N} \left(\,\psi_j(\hat{x}_{i,k}, y_{j,k}^\star) -\psi_j(x^\star, y_j^\star)\,\right).
\end{equation}
Let $\hat{x}_k^\star \DefinedAs \text{argmin}_{x\,\in\,\mc X} \frac{1}{N}\sum_{j\,=\,1}^{N} \psi_j(x, \hat{y}_{j,k})$.  
To analyze optimality gap~\eqref{eq:objgap1}, we introduce a surrogate gap,
\be
\label{eq.surrogate}
\text{Err}'(\hat{x}_{i,k},\hat{y}_k)
\,\DefinedAs\,
\dfrac{1}{N}\!\displaystyle\sum_{j\,=\,1}^{N}\! \left(\psi_j(\hat{x}_{i,k},\hat{y}_{j,k}^\star)
-
\psi_j(x^\star,\hat{y}_{j,k}) \right)
\ee
as well as average primal optimality~\eqref{eq:objgap} and surrogate~\eqref{eq.surrogate} gaps, 
\[
	\ba{rrl}
\overline{\text{Err}}_k
 & \!\!\! \DefinedAs \!\!\! &
 \dfrac1N \displaystyle \sum_{j\,=\,1}^{N}\mathbb{E}[\,\text{Err}(\hat{x}_{j,k})\,],
 	\\
\overline{\text{Err}}_k' 
	& \!\!\! \DefinedAs \!\!\! &
\dfrac{1}{N} \displaystyle \sum_{j\,=\,1}^{N}\mathbb{E}[\,\text{Err}'(\hat{x}_{j,k},\hat{y}_k)\,].
	\ea
\]

In Lemma~\ref{lem:objective2PD}, we establish relation between the surrogate gap $\text{Err}'(\hat{x}_{i,k},\hat{y}_k)$ and the primal optimality gap $\text{Err}(\hat{x}_{i,k})$.
\begin{lemma}\label{lem:objective2PD}
	Let $\hat{x}_{i,k}$ and $\hat{y}_k$ be generated by Algorithm~\ref{alg.main} for agent  $i$ at round $k$.
	Then, $0 \leq \text{Err}(\hat{x}_{i,k})\leq  \text{Err}'(\hat{x}_{i,k},\hat{y}_k)$ and $0\leq \overline{\text{Err}}_k\leq\overline{\text{Err}}_k'$.
\end{lemma}
	\begin{proof}
		See Appendix~\ref{sec.proof.objective2PD}.
	\end{proof}

\begin{lemma}\label{lem:convexconcave}
	Let Assumption~\ref{as:basic} hold and let $\hat{x}_{i,k}$ and $\hat{y}_k$ be generated by Algorithm~\ref{alg.main} for agent  $i$ at round $k$. Then,
	\begin{subequations}
	\be
	\label{eq.cc1}
	\text{Err}(\hat{x}_{i,k}) 
	\,\geq \,
	\dfrac{L_x}{2}\, \Vert x^\star-\hat{x}_{i,k} \Vert^2
	\ee
	\vspace*{-0.35cm}
	\be
	\label{eq.cc2}
	\text{Err}'(\hat{x}_{i,k},\hat{y}_k) 
	\,\geq\,
	  \dfrac{L_y}{2N}\, \displaystyle\sum_{j\,=\,1}^{N}\Vert y_j^\star-\hat{y}_{j,k} \Vert^2
	\ee
	\vspace*{-0.2cm}
	\be
	\label{eq.cc3}
	\text{Err}'(\hat{x}_{i,k},\hat{y}_k) 
	\, \geq \,
	 \dfrac{L_y}{2N}\, \displaystyle\sum_{j\,=\,1}^{N}\Vert y_j^\star-\hat{y}_{j,k}^\star \Vert^2.
	\ee
	\end{subequations}
\end{lemma}
\begin{proof}
			 See Appendix~\ref{sec.proof.convexconcave}.
	\end{proof}

{\color{black}We are now ready to provide an overview of our remaining analysis. In Lemma~\ref{lem:decomposegap}, we use a sum of network errors (NET) and local primal-dual gaps (PDG) to bound surrogate gap~\eqref{eq.surrogate}. In Lemma~\ref{lem:boundII}, we provide a bound on local primal-dual gaps by a sum of local dual gaps and a term that depends on mixing time. We combine Lemma~\ref{lem:decomposegap} and Lemma~\ref{lem:boundII} and apply the restarting strategy to get a recursion on the surrogate gap. Finally, in Section~\ref{sec.thm}, we complete the proof by utilizing induction on the round $k$.}

	\vspace*{-2ex}
\subsection{Useful lemmas}
\label{sec.lem}

\tc{black}{We utilize convexity and concavity of $\psi_j$ with respect to primal and dual variables to decompose surrogate gap~\eqref{eq.surrogate} into parts that quantify the influence of network errors (NET) and local primal-dual gaps (PDG), respectively.}

\begin{lemma}\label{lem:decomposegap}
	Let Assumptions~\ref{as:basic} and~\ref{as:var} hold and let $\hat{x}_{i,k}$ and $\hat{y}_k$ be generated by Algorithm~\ref{alg.main} for agent $i$ at round $k$. Then,
	\[
	\text{Err}'(\hat{x}_{i,k},\hat{y}_k) 
	\,\leq\,
	 \normalfont\text{NET} \,+\, \normalfont\text{PDG}
	\]
	where 
	\[
	\ba{rcl}
	\!\!\normalfont\text{NET} 
	&\!\!\! =\!\!\! & 
	\dfrac{{\color{black}c}}{T_k}\!\displaystyle\sum_{t\,=\,1}^{T_k} \!\bigg(\!\Vert x_{i,k}(t) \!-\! \bar{x}_k(t) \Vert
	\!+\!
	\dfrac{1}{N}\displaystyle\sum_{j\,=\,1}^{N}\left\Vert  x_{j,k}(t) \!-\! \bar{x}_k(t)\right\Vert\!\bigg)
	\\[0.2cm]
	 \!\!\normalfont\text{PDG} 
	 &\!\!\! =\!\!\!  &
	 \dfrac{1}{NT_k}\!\displaystyle\sum_{t\,=\,1}^{T_k}\displaystyle\sum_{j\,=\,1}^{N} \big(\psi_j(x_{j,k}(t),  \hat{y}_{j,k}^\star) - \psi_j(x^\star, y_{j,k}(t))\big).
	\ea
	\]
	\begin{proof}	
		Applying the mean value theorem and boundedness of the gradient of $\psi_j(x,\hat{y}_{j,k}^\star)$ with respect to $x$, we have 
		\[
		\ba{rcl}
				\psi_j(\hat{x}_{i,k},\hat{y}_{j,k}^\star) \,-\, \psi_j(\tilde{x}_k,\hat{y}_{j,k}^\star) & \!\! \leq \!\! & {\color{black}c}\,\Vert \hat{x}_{i,k} - \tilde{x}_k \Vert
	\; \leq \;
				\dfrac{{\color{black}c}}{T_k}\,\displaystyle\sum_{t\,=\,1}^{T_k} \Vert x_{i,k}(t) - \bar{x}_k(t) \Vert
		\ea
		\]
		where the second inequality follows from the Jensen's inequality. Then, 
		breaking $\text{Err}'(\hat{x}_{i,k},\hat{y}_k)$ in~\eqref{eq.surrogate} by adding and subtracting $\tfrac{1}{N}\sum_{j = 1}^{N} \psi_j(\tilde{x}_k,\hat{y}_{j,k}^\star)$, we bound $\text{Err}'(\hat{x}_{i,k},\hat{y}_k)$ by 
		\[
		\dfrac{{\color{black}c}}{T_k}\!\displaystyle\sum_{t\,=\,1}^{T_k} \Vert x_{i,k}(t) - \bar{x}_k(t) \Vert
		+  \dfrac{1}{N}\!\displaystyle\sum_{j\,=\,1}^{N}\! ( \psi_j(\tilde{x}_k,\hat{y}_{j,k}^\star) - \psi_j(x^\star,\hat{y}_{j,k}) ).
		\]
		Next, we find a simple bound for the second sum. We recall that 
		$\tilde{x}_k \DefinedAs \frac{1}{T_k} \sum_{t = 1}^{T_k}\bar{x}_{k}(t)$ and $\hat{y}_{j,k} \DefinedAs \frac{1}{T_{k}}  \sum_{t = 1}^{T_{k}}y_{j,k}(t)$ and apply the Jensen's inequality twice to obtain,
		\be
		\label{eq.furtherub}
		\dfrac{1}{N T_k}\displaystyle\sum_{t\,=\,1}^{T_k} \sum_{j\,=\,1}^N\big(\psi_j(\bar{x}_k(t), \hat{y}_{j,k}^\star) - \psi_j(x^\star, y_{j,k}(t))\big).
		\ee
		Similarly, we have $\psi_j(\bar{x}_k(t),  \hat{y}_{j,k}^\star) 
		- \psi_j(x_{j,k}(t),  \hat{y}_{j,k}^\star)\leq {\color{black}c}\left\Vert \bar{x}_k(t) - x_{j,k}(t)\right\Vert $. Breaking~\eqref{eq.furtherub} by adding and subtracting $\frac{1}{NT_k}\sum_{t = 1}^{T_k}\sum_{j = 1}^{N}\psi_j(x_{j,k}(t),  \hat{y}_{j,k}^\star)$, we further bound~\eqref{eq.furtherub} by
		\[
		\ba{rcl}
		\dfrac{{\color{black}c}}{N T_k}\displaystyle\sum_{t\,=\,1}^{T_k} \sum_{j\,=\,1}^{N}\left\Vert x_{j,k}(t) - \bar{x}_k(t)\right\Vert 
		\,+\, 
		\normalfont\text{PDG}.
		\ea
		\]
		The proof is completed by combining all above bounds.
	\end{proof}
\end{lemma}

Lemma~\ref{lem:decomposegap} establishes a bound for the surrogate gap $\text{Err}'(\hat{x}_{i,k},\hat{y}_k)$ for agent $i$ at round $k$. The terms in \text{NET} describe the accumulated network error that measures the deviation of each agent's estimate from the average. On the other hand, \text{PDG} determines the average of primal-dual gaps incurred by local agents that are commonly used in the analysis of primal-dual algorithms~\cite{nemirovski2009robust,nedic2009subgradient}.

Next, we utilize the Markov mixing property to control the average of local primal-dual gaps \text{PDG}. First, we break the difference $\psi_j(x_{j,k}(t),  \hat{y}_{j,k}^\star) - \psi_j(x^\star, {y}_{j,k}(t))$ into a sum of $\psi_j(x_{j,k}(t),  \hat{y}_{j,k}^\star) - \psi_j(x_{j,k}(t), y_{j,k}(t))$ and $\psi_j(x_{j,k}(t), y_{j,k}(t)) - \psi_j(x^\star, {y}_{j,k}(t))$.
We now can utilize convexity and concavity of $\psi_j(x,y_j)$ to deal with these terms separately. Dividing the sum indexed by $t$ into two intervals, $1\leq t\leq T_k-\tau$ and $T_k-\tau+1\leq t\leq T_k$, the $\text{PDG}$ term can be bounded by $\text{PDG}\leq\text{PDG}^++\text{PDG}^-$, where
\be
\non
\ba{rcl}
\normalfont\text{PDG}^+ 
& \!\! = \!\! &
 \dfrac{1}{NT_k}\!\displaystyle\sum_{t\,=\,1}^{T_k-\tau}\displaystyle\sum_{j\,=\,1}^{N} \big( \dotp{g_{j,y}(z_{j,k}(t))}{\hat{y}_{j,k}^\star - y_{j,k}(t)} 
 	+  
\dotp{g_{j,x}(z_{j,k}(t))}{x_{j,k}(t) - x^\star}\!\big)
\\[0.2cm]
\normalfont\text{PDG}^- 
	& \!\! = \!\! &
 \dfrac{1}{NT_k}\!\displaystyle\sum_{t\,=\,T_k-\tau+1}^{T_k}\displaystyle\sum_{j\,=\,1}^{N}  \big( \dotp{g_{j,y}(z_{j,k}(t))}{\hat{y}_{j,k}^\star - y_{j,k}(t)} 
	+ 
\dotp{g_{j,x}(z_{j,k}(t))}{x_{j,k}(t) - x^\star}\!\big).
\ea
\ee
Here, $\tau$ is the mixing time of the ergodic sequence $\xi_{k,1},\ldots,\xi_{k,t}$ at round $k$. The intuition behind this is that, given the initial $t-\tau$ samples $\xi_{k,1},\ldots,\xi_{k,t-\tau}$, the sample $\xi_{k,t}$ is almost a sample that arises from the stationary distribution $\Pi$. With this in mind, we next show that an appropriate breakdown of the term $\text{PDG}^+$ enables applications of the martingale concentration~from Lemma~\ref{lem:martingale} and mixing time property~\eqref{eq:mixing-time}, thereby producing a gradient-free bound on primal-dual gaps.

\begin{lemma}\label{lem:boundII}
	Let Assumptions~\ref{as:domain}--\ref{as:grad} hold. For $T_k$ that satisfies $T_k\geq1+\lceil\,\log(\Gamma T_k)/|\log\rho|\,\rceil =\tau$, we have
	\[
	\mathbb{E}[\normalfont\text{PDG}]
	\, \leq \,
	\dfrac{{\color{black}c}}{N} \l( \dfrac{ {\color{black}8} }{\sqrt{T_k-\tau}} 
	\, + \,
	\dfrac{1}{ T_k} \r) \displaystyle\sum_{j\,=\,1}^{N} \expect{\,\l\Vert \hat{y}_{j,k}^\star 
	\, - \, 
	y_{j}^\star \r\Vert\,} 
	\, + \,
	\mathbb{E}[\,\normalfont\text{MIX}\,]
	\]
	where 
	\[
	\non
	\ba{rcl}
	\normalfont\text{MIX} 
	& \!\!\!=\!\!\! & 
	\dfrac{2{\color{black}r}L\,+\,\sqrt{2}{\color{black}c}}{NT_k} \!\displaystyle\sum_{t\,=\,\tau+1}^{T_k}\displaystyle\sum_{j\,=\,1}^{N} \left\Vert z_{j,k}(t-\tau)-z_{j,k}(t) \right\Vert
	~ +
	\\[0.2cm]
	&  & 
	\!\!\!\!\!\!\!\!\!\!\!\!\!\!\!\!\!\!\!\!\!\!\!\!\!\!
	\dfrac{1}{2\eta_kT_k} \bigg(\!\!\l\Vert x^\star \!-\! {\color{black}\underbar{$x$}_k(\tau\!+\!1)}\r\Vert^2 \,+\, \dfrac{1}{N}\!\displaystyle\sum_{j\,=\,1}^{N}\! \l\Vert \hat{y}_{j,k}^\star \!-\!y_{j,k}(\tau\!+\!1)  \r\Vert^2\!\!\bigg)
	~ +
	\\[0.2cm]
	&  & 
	\!\!\!\!\!\!\!\!\!\!\!\!\!\!\!\!\!\!\!\!\!\!\!\!\!\!
	\dfrac{{\color{black}c}}{NT_k}\!\displaystyle\sum_{t\,=\,\tau+1}^{T_k}\displaystyle\sum_{j\,=\,1}^{N} \l\Vert x_{j,k}(t) - {\color{black}\underbar{$x$}_k(t)}  \r\Vert
	+
	 \dfrac{2{\color{black}r c}(\tau +1) }{T_k} 
	+
	\eta_k {\color{black}c}^2.
	\ea
	\]
\end{lemma}
\begin{proof}
	Using Assumption~\ref{as:var}, $\text{PDG}^-$ can be upper bounded by
	\[
	\dfrac{{\color{black}c}}{NT_k}\displaystyle\sum_{t\,=\,T_k-\tau+1}^{T_k}\displaystyle\sum_{j\,=\,1}^{N}  \big(\left\Vert \hat{y}_{j,k}^\star-y_{j,k}(t) \right\Vert + \left\Vert x_{j,k}(t) -{x}^\star\right\Vert\big).
	\]
	Since the domain is bounded, this term is upper bounded by ${ 2 {\color{black}r c} \tau}/{T_k}$.
	
	Next, we deal with $\text{PDG}^+$. We divide each 
	$\langle g_{j,y}(z_{j,k}(t)), \hat{y}_{j,k}^\star -y_{j,k}(t)\rangle +\langle g_{j,x}(z_{j,k}(t)),x_{j,k}(t) - x^\star\rangle$
	 into a sum of five terms~\eqref{eq.md1}--\eqref{eq.md5} by adding and subtracting $G_{j,x}(z_{j,k}(t);\xi_{k,t+\tau})$ and $G_{j,y}(z_{j,k}(t);\xi_{k,t+\tau})$ into the first arguments of two inner products, respectively, and then inserting $y_j^\star$ into the second argument for the first resulting inner product,
	 \begin{subequations}
	 	\label{eq.md}
	\be
	\label{eq.md1}
	\dotp{g_{j,y}(z_{j,k}(t)) - G_{j,y}(z_{j,k}(t);\xi_{k,t+\tau})}{\hat{y}_{j,k}^\star -y_{j}^\star }
	\ee
	\be
	\label{eq.md2}
	 \dotp{g_{j,y}(z_{j,k}(t))-G_{j,y}(z_{j,k}(t);\xi_{k,t+\tau})}{ y_{j}^\star -y_{j,k}(t)}
	\ee
	\be
	\label{eq.md3}
	 \dotp{ G_{j,y}(z_{j,k}(t);\xi_{k,t+\tau})}{ \hat{y}_{j,k}^\star -y_{j,k}(t)} 
	\ee
	\be
	\label{eq.md4}
	 \dotp{g_{j,x}(z_{j,k}(t))-G_{j,x}(z_{j,k}(t);\xi_{k,t+\tau}) }{x_{j,k}(t)-x^\star}
	\ee
	\be
	\label{eq.md5}
	 \dotp{G_{j,x}(z_{j,k}(t);\xi_{k,t+\tau}) }{x_{j,k}(t) -x^\star}.
	\ee
	\end{subequations}
	We sum each of~\eqref{eq.md1}--\eqref{eq.md5} over $t=1,\ldots,T_k-\tau$ and $j=1,\ldots,N$, divide it by $NT_k$, and represent each of them using $S_1$ to $S_5$. Thus, $\mathbb{E}[\,\text{PDG}^+\,]= \mathbb{E}[\,S_1+S_2+S_3+S_4+S_5\,]$. We next bound each term separately.
	
	\textit{Bounding the term $\expect{\,S_1\,}$}: For agent $j$, we introduce a martingale difference sequence $\{X_j(t)\}_{t = 1}^{T_k}$,
	\[
	X_j(t) 
	\,=\, 
	g_{j,y}(z_{j,k}(t)) \,-\, G_{j,y}(z_{j,k}(t);\xi_{k,t+\tau}) \,-\, {\color{black}E_{j,k}(t)}
	\]
	where ${\color{black}E_{j,k}(t)} \DefinedAs \mathbb{E} [ \,g_{j,y}(z_{j,k}(t)) - G_{j,y}(z_{j,k}(t);\xi_{k,t+\tau}) \,\vert\, \mc F_{k,t}\,]$ and $M = {\color{black}4{\color{black}c}}$ in Lemma~\ref{lem:martingale}. This allows us to rewrite $S_1$ as
	\be
	\non
	S_1 
	 \; = \;
	  \dfrac{1}{N}\displaystyle\sum_{j\,=\,1}^{N} \dotp{\dfrac{1}{T_k}\sum_{t\,=\,1}^{T_k -\tau} X_j(t) }{\hat{y}_{j,k}^\star - y_{j}^\star } 
	 \; + \;
	  \dfrac{1}{N T_k}\displaystyle\sum_{j\,=\,1}^{N} \displaystyle\sum_{t\,=\,1}^{T_k-\tau}\dotp{ {\color{black} E_{j,k}(t)} } {\hat{y}_{j,k}^\star - y_{j}^\star}.
	\ee
	Since $(T_k-\tau)/ T_k\leq 1$, Lemma~\ref{lem:martingale} implies
	\begin{equation*}
	\expect{\,\left\Vert \dfrac{1}{T_k}\displaystyle\sum_{t\,=\,1}^{T_k-\tau} X_j(t) \right\Vert^2 \,} 
	\,\leq\,
	{\color{black}\dfrac{64 {\color{black}c}^2}{T_k -\tau}}.
	\end{equation*}
	Using Assumption~\ref{as:var} and the mixing time property~\eqref{eq:mixing-time}, we can bound {\color{black}$\norm{E_{j,k}(t)}$} by
	\be
	\label{eq.emixing}
	\ba{rcl}
	\norm{E_{j,k}(t)}
	& \!\! =\!\!  & 
	\l\Vert\displaystyle\int \!\! G_{i,y}(z_{i,k}(t);\xi) \big(\pi(\xi) - p_{k,t+\tau}^{[t]}(\xi)\big)\, d \mu(\xi) \r\Vert 
	\\[0.2cm]
	& \!\! \leq\!\!  & 
	{\color{black}c} \displaystyle\int \left\vert \pi(\xi) - p_{k,t+\tau}^{[t]}(\xi)\right\vert d \mu(\xi) 
	\; = \;
	{\color{black}c} \,d_{\normalfont\text{tv}}( P_{k,t+\tau}^{[t]},\Pi).
	\ea
	\ee
	Applying the triangle and Cauchy-Schwartz inequalities to $\expect{S_1}$ and using~\eqref{eq.emixing} lead to,
	\be
	\non
	\ba{rcl}
    \expect{ S_1 } 
    &\!\! \leq \!\!  & 
    \dfrac{1}{N}\, \dfrac{ {\color{black}8{\color{black}c}} }{\sqrt{T_k\,-\,\tau}} \displaystyle\sum_{j\,=\,1}^{N} \l\Vert \hat{y}_{j,k}^\star - y_{j}^\star \r\Vert 
    \; + \; \dfrac{{\color{black}c}}{N T_k}\displaystyle\sum_{j\,=\,1}^{N} \displaystyle\sum_{t\,=\,1}^{T_k-\tau} \expect{\,d_{\normalfont\text{tv}}(P_{k,t+\tau}^{[t]},\Pi)\,} \l\Vert \hat{y}_{j,k}^\star - y_{j}^\star \r\Vert
	\\[0.2cm]
	& \!\! \leq\!\!  &  
	 \dfrac{{\color{black}c}}{N} \left(\dfrac{ {\color{black}8}}{\sqrt{T_k\,-\,\tau}} + \dfrac{1}{T_k}\right)\displaystyle\sum_{j\,=\,1}^{N} \l\Vert \hat{y}_{j,k}^\star - y_{j}^\star \r\Vert. 
	\ea
	\ee
	where the last inequality follows from the mixing time property: if we choose $T_k$ such that $\tau= 1+\lceil\,\log(\Gamma T_k)/\log|\rho|\,\rceil\geq\tau_{\normalfont\text{tv}}(P_k^{[t]},{1}/{T_k})$, then we have 
	$\mathbb{E}[\,d_{\normalfont\text{tv}}( P_{k,t+\tau}^{[t]},\Pi)\,] \leq {1}/{T_k}$.
	
	\textit{Bounding the terms $\expect{S_2}$ and $\expect{S_4}$}: Using the Cauchy-Schwartz inequality and~\eqref{eq.emixing}, we can bound $\expect{ S_2 }$ by
	\be
	\non
	\ba{rcl}
	\expect{ S_2 } 
	& \!\! \leq\!\!  & 
	\dfrac{{\color{black}c}}{NT_k} \! \displaystyle\sum_{t\,=\,1}^{T_k-\tau}\displaystyle\sum_{j\,=\,1}^{N} \mathbb{E}[\,d_{\normalfont\text{tv}}(\Pi, P_{k,t+\tau}^{[t]})\, ] \,\Vert y_{j}^\star - y_{j,k}(t) \Vert
	\\[0.2cm]
	& \!\! \leq\!\!  & 
	\dfrac{{\color{black}c}}{N T_k^2}  \displaystyle\sum_{t\,=\,1}^{T_k-\tau}\displaystyle\sum_{j\,=\,1}^{N} \mathbb{E}[\,\Vert y_{j}^\star - y_{j,k}(t) \Vert\,]
	\\[0.2cm]
	& \!\! \leq \!\! & 
	\dfrac{{\color{black}r c} }{T_k}.
	\ea
	\ee
	Similarly, we have 
	\[
	\expect{S_4}  
	\,\leq\,
	 \dfrac{{\color{black}c}}{N T_k^2}\displaystyle\sum_{t\,=\,1}^{T_k-\tau} \displaystyle\sum_{j\,=\,1}^{N} \mathbb{E}[\,\Vert x_{j,k}(t)-x^\star \Vert\,] 
	 \,\leq\,
	  \dfrac{ {\color{black}r c}}{T_k}.
	\]
	
	\textit{Bounding the terms~$\mathbb{E}[ S_3 ]$ and~$\mathbb{E}[ S_5 ]$}: We re-index the sum in $S_3$ over $t$ and write it as
	\[ 
	\dfrac{1}{NT_k}\displaystyle\sum_{t\,=\,\tau+1}^{T_k}\displaystyle\sum_{j\,=\,1}^{N} \dotp{ G_{j,y}(z_{j,k}(t-\tau);\xi_{k,t})}{ \hat{y}_{j,k}^\star -y_{j,k}(t-\tau)}
	\]
	where each $\langle G_{j,y}(z_{j,k}(t-\tau);\xi_{k,t}), \hat{y}_{j,k}^\star -y_{j,k}(t-\tau)\rangle $ can be split into a sum of the following three inner products,
	\be
	\non
	\ba{c}
	\!\!\langle G_{j,y}(z_{j,k}(t\!-\!\tau);\xi_{k,t}) - G_{j,y}(z_{j,k}(t);\xi_{k,t}),\, \hat{y}_{j,k}^\star - y_{j,k}(t \!-\!\tau)\rangle
	\\[0.2cm]
	\langle G_{j,y}(z_{j,k}(t);\xi_{k,t}), \hat{y}_{j,k}^\star -y_{j,k}(t) \rangle 
	\\[0.2cm]
	\langle G_{j,y}(z_{j,k}(t);\xi_{k,t}), y_{j,k}(t) -y_{j,k}(t-\tau) \rangle.
	\ea
	\ee
	 From the Lipschitz continuity of the gradient, we have $\Vert G_{j,y}(z_{j,k}(t-\tau);\xi_{k,t}) -  G_{j,y}(z_{j,k}(t);\xi_{k,t}) \Vert \leq L\Vert z_{j,k}(t-\tau)-z_{j,k}(t) \Vert$. Combining this with the domain/gradient boundedness yields,
	\be
	\non
	\ba{rcl}
	S_3
	&\!\! \leq\!\! & \dfrac{{\color{black}r} L}{NT_k}\displaystyle\sum_{t\,=\,\tau+1}^{T_k}\displaystyle\sum_{j\,=\,1}^{N} \left\Vert z_{j,k}(t-\tau) - z_{j,k}(t) \right\Vert 
	\\[0.2cm]
	&&\!\!\!\!\!\!\!\! +\, \dfrac{1}{NT_k}\displaystyle\sum_{t\,=\,\tau+1}^{T_k}\displaystyle\sum_{j\,=\,1}^{N} \!\dotp{ G_{j,y}(z_{j,k}(t);\xi_{k,t}) }{ \hat{y}_{j,k}^\star -y_{j,k}(t)} 
	\\[0.2cm]
	&&\!\!\!\!\!\!\!\! +\, \dfrac{{\color{black}c}}{NT_k}\displaystyle\sum_{t\,=\,\tau+1}^{T_k}\displaystyle\sum_{j\,=\,1}^{N} \left\Vert y_{j,k}(t)- y_{j,k}(t-\tau) \right\Vert.
	\ea
	\ee
	Similarly, we have 
	\be
	\non
	\ba{rcl}
	S_5
	& \!\!  \leq \!\! & \dfrac{{\color{black}r} L}{NT_k}\displaystyle\sum_{t\,=\,\tau+1}^{T_k}\displaystyle\sum_{j\,=\,1}^{N} \left\Vert z_{j,k}(t-\tau)-z_{j,k}(t) \right\Vert 
	\\[0.2cm]
	&&\!\!\!\!\!\!\!\! +\, \dfrac{ {\color{black}c}}{NT_k}\displaystyle\sum_{t\,=\,\tau+1}^{T_k}\displaystyle\sum_{j\,=\,1}^{N} \left\Vert x_{j,k}(t-\tau)-x_{j,k}(t) \right\Vert
	\\[0.2cm]
	&&\!\!\!\!\!\!\!\! +\, \dfrac{1}{NT_k}\,\displaystyle\sum_{t\,=\,\tau+1}^{T_k}\displaystyle\sum_{j\,=\,1}^{N} \,\dotp{ G_{j,x}(z_{j,k}(t);\xi_{k,t}) }{ x_{j,k}(t) -x^\star}.
	\ea
	\ee
	Inserting {\color{black}$\underbar{$x$}_k(t)$} into the second argument of the inner product in the above bound on $S_5$ and using inequality $(\Vert a \Vert + \Vert b \Vert)^2 \leq 2 \Vert a \Vert^2+2\Vert b \Vert^2$, we bound $S_3+S_5$ by 
	\[
	\ba{rcl}
	 \!\! S_3\,+\,S_5
	 & \!\! \leq \!\! & \dfrac{2{\color{black}r}L+\sqrt{2}{\color{black}c}}{NT_k}\!\!\!\displaystyle\sum_{t\,=\,\tau+1}^{T_k}\displaystyle\sum_{j\,=\,1}^{N} \left\Vert z_{j,k}(t-\tau)-z_{j,k}(t) \right\Vert
	\\[0.2cm]
	&  &\!\!\!\!\!\!\!\!\!\!\!\!\!\!\!\!\!\!\!\!\!\!\!\! +\, \dfrac{1}{NT_k}\displaystyle\sum_{t\,=\,\tau+1}^{T_k}\displaystyle\sum_{j\,=\,1}^{N}\dotp{ G_{j,y}(z_{j,k}(t);\xi_{k,t}) }{ \hat{y}_{j,k}^\star - y_{j,k}(t)} 
	\\[0.2cm]
	&  &\!\!\!\!\!\!\!\!\!\!\!\!\!\!\!\!\!\!\!\!\!\!\!\! +\, \dfrac{1}{NT_k} \displaystyle\sum_{t\,=\,\tau+1}^{T_k}\displaystyle\sum_{j\,=\,1}^{N} \dotp{ G_{j,x}(z_{j,k}(t);\xi_{k,t}) }{ x_{j,k}(t) - {\color{black}\underbar{$x$}_k(t)} }
	\\[0.2cm]
	&  &\!\!\!\!\!\!\!\!\!\!\!\!\!\!\!\!\!\!\!\!\!\!\!\! +\, \dfrac{1}{T_k}\displaystyle\sum_{t\,=\,\tau+1}^{T_k}\dotp{\dfrac{1}{N}\displaystyle\sum_{j\,=\,1}^{N} G_{j,x}(z_{j,k}(t);\xi_{k,t}) }{ {\color{black}\underbar{$x$}_k(t)} - x^\star}.
	\ea
	\]
	On the right-hand side of the above inequality, the third term can be bounded by applying the Cauchy-Schwartz inequality and the gradient boundedness; for the second term and the fourth term, application of Lemma~\ref{lem:online-bound} yields,
	\[
	\ba{c}
	 \!\!\!\!\!\!\!\!\!\!\!\!\!\!\!\!\!\!\!\!\!\!\!\!\!\!\!\!\!\!\!\!\!\!\!\!\!\!\!\!\!\!\!\!\!\! \displaystyle\sum_{t\,=\,\tau+1}^{T_k}\!\dotp{ G_{j,y}(z_{j,k}(t);\xi_{k,t}) }{ \hat{y}_{j,k}^\star - y_{j,k}(t)} 
	 \\[0.2cm]
	 \,\leq\,
	  \dfrac{\l\Vert \hat{y}_{j,k}^\star - y_{j,k}(\tau\!+\!1)  \r\Vert^2 }{2\eta_k} 
	  \,+\,
	  \dfrac{\eta_k}{2}\displaystyle\sum_{t\,=\,\tau+1}^{T_k} \l\Vert G_{j,y}(z_{j,j}(t);\xi_{k,t})\r\Vert^2
	  \ea
	\]
	\[
	\ba{c}
	 \!\!\!\!\!\!\!\!\!\!\!\!\!\!\!\!\!\!\!\!\!\!\!\!\!\!\!\!\!\!\!\!\!\!\!\!\! \displaystyle\sum_{t\,=\,\tau+1}^{T_k}\!\dotp{\dfrac{1}{N}\displaystyle\sum_{j\,=\,1}^{N} G_{j,x}(z_{j,k}(t);\xi_{k,t}) }{ {\color{black}\underbar{$x$}_k(t)} - x^\star}
	 \\[0.2cm]
	 \leq
	  \dfrac{\l\Vert {\color{black}\underbar{$x$}_k(\tau\!+\!1)}\!-\!x^\star \r\Vert^2}{2\eta_k} \!+\!
	  \dfrac{\eta_k}{2} \!\!\displaystyle\sum_{t\,=\,\tau+1}^{T_k} \bigg\Vert \dfrac{1}{N}\displaystyle\sum_{j\,=\,1}^{N} G_{j,x}(z_{j,k}(t);\xi_{k,t}) \bigg\Vert^2
	 \ea
	\]
	which allows us to bound $S_3+S_5$ by 
	
	\[
	\ba{rcl}
	S_3\,+\,S_5
	&\!\!\!\! \leq \!\!\!\!& \dfrac{2{\color{black}r}L+\sqrt{2}{\color{black}c}}{NT_k}\!\!\!\displaystyle\sum_{t\,=\,\tau+1}^{T_k}\displaystyle\sum_{j\,=\,1}^{N}\left\Vert z_{j,k}(t-\tau)-z_{j,k}(t) \right\Vert
	\\[0.2cm]
	&  &\!\!\!\!\!\!\!\!\!\!\!\!\!\!\!\!\!\!\!\!\!\!\!\!\!\!\!\!\!\!\!\!\! +\, \dfrac{1}{2\eta_k T_k}\! \bigg(\!\!\l\Vert {\color{black}\underbar{$x$}_k(\tau+1)} \!-\!x^\star \r\Vert^2\!+\!\dfrac{1}{N}\!\displaystyle\sum_{j\,=\,1}^{N} \! \l\Vert \hat{y}_{j,k}^\star\!-\!y_{j,k}(\tau+1)  \r\Vert^2\!\!\bigg)
	\\[0.2cm]
	&  &\!\!\!\!\!\!\!\!\!\!\!\!\!\!\!\!\!\!\!\!\!\!\!\!\!\!\!\!\!\!\!\!\! +\, \dfrac{{\color{black}c}}{NT_k}\displaystyle\sum_{t\,=\,\tau+1}^{T_k}\displaystyle\sum_{j\,=\,1}^{N} \l\Vert x_{j,k}(t) -{\color{black}\underbar{$x$}_k(t)} \r\Vert \,+\, \eta_k {\color{black}c}^2.
	\ea
	\]
	
	Taking expectation of $S_3+S_5$ and adding previous bounds on $\mathbb{E}[ S_1 ]$, $\mathbb{E}[ S_2 ]$, and $\mathbb{E}[ S_4 ]$ to it lead to the final bound on $\mathbb{E}[\,\text{PDG}^+\,]$. The proof is completed by adding the established upper bounds on $\mathbb{E}[ \text{PDG}^+ ]$ and $\mathbb{E}[ \text{PDG}^- ]$.
\end{proof}

Lemma~\ref{lem:boundII} is based on the ergodic analysis of the mixing process. We may set $\tau=0$ to take the traditional stochastic gradient method with i.i.d.\ samples. \tc{black}{In fact, by combining results of Lemmas~\ref{lem:decomposegap} and~\ref{lem:boundII}, we can obtain a loose bound $O(1/\sqrt{T_k})$ bound for the surrogate gap $\text{Err}'(\hat{x}_{i,k},\hat{y}_k)$.} Instead, we combine the strong concavity of $\psi_j(x,y_j)$ in terms of $y_j$ with Lemma~\ref{lem:boundII} to establish $O(1/{T_k})$ bound on the surrogate gap.
\begin{lemma}\label{lem:recursivesurrogate}
	Let Assumptions~\ref{as:domain}--\ref{as:grad} hold. For $\eta_k$ and $T_k$ that satisfy $L_x \eta_k T_k\geq 16$ and $T_k\geq1+\lceil\,\log(\Gamma T_k)/|\log\rho|\,\rceil=\tau$, we have
	\be
	\non
	\ba{rcl}
	\mathbb{E}[\,\text{Err}'(\hat{x}_{i,k},\hat{y}_k)\,]
	&\!\!\!\!\leq\!\!\!\!& 
	4\,\mathbb{E}[\,\normalfont\text{NET}'\,]
	\,+\,
	\dfrac{{4}\,{\color{black}c}^2}{{L_y}} \l(\dfrac{4}{\sqrt{T_k-\tau}} +\dfrac{1}{ T_k}\r)^2
	\\[0.3cm]
	& & \!\!\!\!\!\!\!\!\!\!\!\!\!\!\!\!\!\!\!\!\!\!\!\!\!\!\!\!\!\!\!\!\!\!\!\!\!\!\!\!\!\!\!\!\!\!\!\!\!\!
	\,+\,
	16\eta_k {\color{black}c}({\color{black}r}L+{\color{black}c})\tau
	+
	4\eta_k {\color{black}c}^2
	+
	 \dfrac{8\eta_k {\color{black}c}^2 \tau^2}{T_k}
	 +
	  \dfrac{8{\color{black}rc}(\tau +1) }{T_k} 
	\\[0.3cm]
	&  & 
	\!\!\!\!\!\!\!\!\!\!\!\!\!\!\!\!\!\!\!\!\!\!\!\!\!\!\!\!\!\!\!\!\!\!\!\!\!\!\!\!\!\!\!\!\!\!\!\!\!\!
	\,+\,
	\dfrac{16\,\mathbb{E}[\,\text{Err}'(\hat{x}_{i,k-1},\hat{y}_{k-1})\,]}{L_y\eta_kT_k}
	\,+\,
	\displaystyle\sum_{j\,=\,1}^{N}  \dfrac{8\,\mathbb{E}[\,\text{Err}'(\hat{x}_{j,k-1},\hat{y}_{k-1})\,]}{L_x \eta_k NT_k}
	\ea
	\ee
	where
	 \[
	\ba{rcl}
	\normalfont\text{NET}' 
	&\!\!\! \DefinedAs \!\!\! & 
	\normalfont\text{NET}
	\,+\,
	\dfrac{4\,({\color{black}r}L+{\color{black}c})}{NT_k} \displaystyle\sum_{t\,=\,1}^{T_k}\displaystyle\sum_{j\,=\,1}^{N}  \Vert x_{j,k}(t)- {\color{black}\underbar{$x$}_k(t)} \Vert .
	\ea
	\]
\end{lemma}
\begin{proof}
	Taking expectation of~\eqref{eq.cc3} yields,
	\be
	\non
	\ba{rcl}
	\mathbb{E}[\,\text{Err}'(\hat{x}_{i,k},\hat{y}_k)\,]
	&\!\! \geq\!\!  &
	  \dfrac{L_y}{2N} \displaystyle\sum_{j\,=\,1}^{N} \mathbb{E}[\,\Vert y_j^\star-\hat{y}_{j,k}^\star \Vert^2\,]
	  \\[0.2cm]
	  &\!\! \geq\!\! &
	  \dfrac{L_y}{2}\, \bigg(\dfrac{1}{N} \displaystyle\sum_{j\,=\,1}^{N} \mathbb{E}[\,\Vert y_j^\star-\hat{y}_{j,k}^\star \Vert\,]\bigg)^2
	  \ea
	\ee
	where we use $\mathbb{E}[ X^2 ]\geq\mathbb{E}[ X ]^2$ along with $(1/N )\sum_{i\,=\,1}^{N} a_i^2\geq( (1/N) \sum_{i\,=\,1}^N{a_i})^2$ to establish the second inequality. Substituting the above inequality into the result in Lemma~\ref{lem:boundII} and combining it with Lemma~\ref{lem:decomposegap} yield,
	\be
	\non
	\ba{rcl}
	\expect{ \,\text{Err}'(\hat{x}_{i,k},\hat{y}_k)\, } 
	&\!\!\!\leq\!\!\!& 
	\mathbb{E}[\,\text{NET}\,]
	\,+\, \mathbb{E}[\,\normalfont\text{MIX}\,]
	\; + \;
	 \dfrac{\sqrt{2}\,{\color{black}c}}{\sqrt{L_y}}\l(\dfrac{4}{\sqrt{T_k\,-\,\tau}}+\dfrac{1}{ T_k}\r) \mathbb{E}[\, \text{Err}'(\hat{x}_{i,k},\hat{y}_k)\, ]^{1/2}.
	\ea
	\ee
\tc{black}{Thus, $ \zeta \DefinedAs \mathbb{E}[\,\text{Err}'(\hat{x}_{i,k},\hat{y}_k)\,]^{1/2}$ satisfies the quadratic inequality $\zeta^2 \leq a \zeta+b$. Combining the values of $\zeta$ that satisfy this inequality with $(\Vert a \Vert + \Vert b \Vert)^2 \leq 2 \Vert a \Vert^2+2\Vert b \Vert^2$ leads to,}
	\be
	\label{eq.keybound}
	\ba{rcl}
	\mathbb{E}[\,\text{Err}'(\hat{x}_{i,k},\hat{y}_k)\,]
	&\!\! \leq\!\! & 
	2\,\mathbb{E}[\,\text{NET}\,]
	\,+\,
	 2\,\mathbb{E}[\,\normalfont\text{MIX}\,]
	\,+\,
	\dfrac{{2}{\color{black}c}^2}{{L_y}}\! \l(\dfrac{4}{\sqrt{T_k-\tau}} +
	\dfrac{1}{ T_k}\r)^2.
	\ea
	\ee
	
	\tc{black}{The remaining task is to express expectations on the right-hand side of~\eqref{eq.keybound} in terms of previously introduced terms. Lemma~\ref{lem.netavearaging} provides a bound on $\mathbb{E}[\,\text{NET}\,]$ and, next, we evaluate the terms in $\mathbb{E}[\,\text{MIX}\,]$. Combination of the triangle inequality with non-expansiveness of projection allows us to bound $\mathbb{E}[\,\left\Vert z_{j,k}(t-\tau)-z_{j,k}(t) \right\Vert\,] $ by} 
	\[
	\mathbb{E}[\,\Vert y_{j,k}'(t-\tau)-y_{j,k}'(t) \Vert\, ]
	\,+\,
	 \mathbb{E}[\, \Vert x_{j,k}(t-\tau)-x_{j,k}(t) \Vert\, ]
	\]
	where $\mathbb{E}[ \,\left\Vert x_{j,k}(t-\tau)-x_{j,k}(t) \right\Vert \,]$ can be further bounded by a sum of three terms: $\mathbb{E}[\, \left\Vert x_{j,k}(t-\tau)-{\color{black}\underbar{$x$}_k(t-\tau)} \right\Vert \,] $, $\mathbb{E}[\,\left\Vert {\color{black}\underbar{$x$}_k(t-\tau)} - {\color{black}\underbar{$x$}_k(t)} \right\Vert \,] $, and $\mathbb{E}[\, \left\Vert {\color{black}\underbar{$x$}_k(t)}-x_{j,k}(t) \right\Vert \,] $. Similarly, $\left\Vert {\color{black}\underbar{$x$}_k(t-\tau) }- {\color{black}\underbar{$x$}_k(t)} \right\Vert \leq \left\Vert \bar{x}_k'(t-\tau)-\bar{x}_k'(t) \right\Vert$.  By the gradient boundedness, it is clear from the primal-dual updates that $\mathbb{E}[\,\Vert y_{j,k}'(t-\tau)-y_{j,k}'(t) \Vert \,],\mathbb{E}[\,\Vert {\color{black}\underbar{$x$}_k(t-\tau)} - {\color{black}\underbar{$x$}_k(t)} \Vert \,]  \leq \eta_k {\color{black}c} \tau$. Thus, we have
	\[
	\ba{rcl}
	\!\!& &\!\!\!\!\!\!\!\!\!\!\!\!  \dfrac{1}{NT_k}\!\displaystyle\sum_{t\,=\,\tau+1}^{T_k}\displaystyle\sum_{j\,=\,1}^{N} \mathbb{E}[\,\Vert z_{j,k}(t-\tau)-z_{j,k}(t) \Vert \,]
	\\[0.2cm]
	\!\!&\!\!\!\!\leq\!\!\!\!& 
	2\eta_k {\color{black}c}\tau + \dfrac{1}{NT_k}\!\displaystyle\sum_{t\,=\,\tau+1}^{T_k}\displaystyle\sum_{j\,=\,1}^{N} \mathbb{E}[\,\Vert x_{j,k}(t\!-\!\tau)\!-\!{\color{black}\underbar{$x$}_k(t\!-\!\tau)} \Vert\,] 
	\\[0.2cm]
	&& +\, \dfrac{1}{NT_k}\displaystyle\sum_{t\,=\,\tau+1}^{T_k}\displaystyle\sum_{j\,=\,1}^{N} \mathbb{E}[\,\Vert {\color{black}\underbar{$x$}_k(t)}-x_{j,k}(t)\Vert\,]
	\\[0.2cm]
	\!\!&\!\!\!\!\leq\!\!\!\!& 
	2\eta_k {\color{black}c} \tau 
	\,+\, \dfrac{2}{NT_k}\displaystyle\sum_{t\,=\,1}^{T_k}\displaystyle\sum_{j\,=\,1}^{N} \mathbb{E}[\,\Vert x_{j,k}(t)- {\color{black}\underbar{$x$}_k(t)} \Vert\,]
	\ea
	\] 
where we \tc{black}{sum over $t$ from $1$ to $T_k$ instead of from $\tau+1$ to $T_k$} in the last inequality.
	Now, we turn to next two expectations as follows,
	\[
	\ba{rcl}
	&& \!\!\!\!\!\!\!\!\! \mathbb{E}[\,\Vert x^\star -  {\color{black}\underbar{$x$}_k(\tau+1)} \Vert^2\,]
	\\[0.2cm]
	& \!\! =\!\!  & \mathbb{E}[\,\Vert  {\color{black}\underbar{$x$}_k(\tau+1)} - {\color{black}\underbar{$x$}_k(1)} +  {\color{black}\underbar{$x$}_k(1)} -  x^\star \Vert^2\,]
	\\[0.2cm]
	& \!\! \leq \!\! & 2\,\mathbb{E}[\,\Vert  {\color{black}\underbar{$x$}_k(\tau+1)} -  {\color{black}\underbar{$x$}_k(1)} \Vert^2\,] \,+\, 2\,\mathbb{E}[\,\Vert  {\color{black}\underbar{$x$}_k(1)} - x^\star \Vert^2\,]
	\\[0.2cm]
	& \!\! \leq \!\!  & 2\,\mathbb{E}[\,\Vert \bar{x}_k'(\tau+1) -\bar{x}_k'(1) \Vert^2\,] \,+\, 2\,\mathbb{E}[\,\Vert  {\color{black}\underbar{$x$}_k(1)}-x^\star \Vert^2\,]
	\\[0.1cm]
	& \!\! \leq \!\! & 2 \,\eta_k^2\, {\color{black}c}^2\, \tau^2  \,+\,  
	\dfrac{2}{N} 
	\displaystyle \sum_{j\,=\,1}^N\mathbb{E}[\,\Vert x_{j,k}'(1)-x^\star\Vert^2\,]
	\\[0.2cm]
	& \!\! = \!\! & 2\, \eta_k^2\, {\color{black}c}^2\, \tau^2  \,+\,  \dfrac{2}{N} \displaystyle \sum_{j\,=\,1}^N\mathbb{E}[\,\Vert x_{j,k}(1)-x^\star\Vert^2\,]
	\ea
	\]
	where we apply the inequality $\norm{a+b}^2 \leq 2\norm{a}^2+2\norm{b}^2$ and the non-expansiveness of projection for the first and the second inequalities; {\color{black}the third inequality is because of~\eqref{eq:simple-update}, the gradient boundedness, and application of  the Jensen's inequality to $\Vert\cdot\Vert^2$ with $\underbar{$x$}_{k}(1) \DefinedAs \frac{1}{N}\sum_{j=1}^{N}x_{j,k}'(1)$; and the last equality follows the initialization $x_{j,k}'(1) = x_{j,k}(1)$.}
	Similarly, we derive  
	\[
	\ba{rcl}
	\!\!\!\!&&\!\!\!\!\!\!\!\!\! \mathbb{E}[\,\Vert \hat{y}_{j,k}^\star -y_{j,k}(\tau+1)  \Vert^2\,]
	\\[0.2cm]
	\!\!\!\!&\!\! =\!\! & \mathbb{E}[\,\Vert \hat{y}_{j,k}^\star  - y_{j,k}(1) + y_{j,k}(1) - y_{j,k}(\tau+1)  \Vert^2\,]
	\\[0.2cm]
	\!\!\!\!& \!\! \leq\!\!  & 2\,\mathbb{E}[\,\Vert \hat{y}_{j,k}^\star -y_{j,k}(1) \Vert^2\,]\, +\,2\, \mathbb{E}[\,\Vert y_{j,k}(1)- y_{j,k}(\tau)\Vert^2\,]
	\\[0.2cm]
	\!\!\!\!& \!\! \leq\!\! & 2\,\mathbb{E}[\,\Vert \hat{y}_{j,k}^\star -y_{j,k}(1) \Vert^2\,] \,+\,2\, \mathbb{E}[\,\Vert y_{j,k}'(1)- y_{j,k}'(\tau)  \Vert^2\,]
	\\[0.2cm]
	\!\!\!\!& \!\!\!  \leq \!\!\!  & 2\,\mathbb{E}[\,\Vert \hat{y}_{j,k}^\star -y_{j,k}(1) \Vert^2\,] \,+\,2 \,\eta_k^2 {\color{black}c}^2 \tau^2
	\\[0.2cm]
	\!\!\!\!& \!\!  = \!\! & 2\,\mathbb{E}[\,\Vert \hat{y}_{j,k}^\star-y_{j}^\star + y_{j}^\star - y_{j,k}(1) \Vert^2\,] \,+\,2\, \eta_k^2 {\color{black}c}^2 \tau^2
	\\[0.2cm]
	\!\!\!\!& \!\! \leq \!\! & 4\,\mathbb{E}[\,\Vert \hat{y}_{j,k}^\star-y_{j}^\star \Vert^2\,]\,+\,4\,\mathbb{E}[\,\Vert y_{j}^\star- y_{j,k}(1) \Vert^2\,] \,+\, 2\, \eta_k^2 {\color{black}c}^2 \tau^2.
	\ea
	\]
	From~\eqref{eq.cc3} we know that
	\[
	\dfrac{1}{N}\displaystyle\sum_{j\,=\,1}^{N}\mathbb{E}[\,\Vert \hat{y}_{j,k}^\star -y_{j}^\star \Vert^2\,]
	\,\leq\, \frac{2}{L_y}\,\mathbb{E}[\,\text{Err}'(\hat{x}_{i,k},\hat{y}_k)\,].
	\] 
	Now, we collect above inequalities for~\eqref{eq.keybound}. \tc{black}{For notational convenience, we sum over $t$ from $1$ until $T_k$ instead of from $\tau+1$ to $T_k$ and combine similar terms to obtain the following bound on $\mathbb{E}[\,\normalfont\text{MIX}\,]$,}
	\be
	\non
	\ba{rcl}
	& & \!\!\!\!\!\!\!\!\!\!\!\!\!\!
	4\eta_k {\color{black}c}({\color{black}r}L + {\color{black}c})\tau
	+\dfrac{4({\color{black}r}L + {\color{black}c})}{NT_k} \displaystyle\sum_{t\,=\,1}^{T_k}\displaystyle\sum_{j\,=\,1}^{N}\!   \expect{\, \left\Vert x_{j,k}(t) \!-\! {\color{black}\underbar{$x$}_k(t)} \right\Vert \,} 
	\\[0.2cm]
	&  & \!\!\!\!\!\!\!\!\!\!\!\!\!\!
	+\,
	\dfrac{1}{\eta_k NT_k} \displaystyle\sum_{j\,=\,1}^{N}  \big(2\mathbb{E}[\,\Vert y_{j}^\star-y_{j,k}(1)\Vert^2\,] 
	+ \mathbb{E}[\,\Vert x_{j,k}(1)-x^\star \Vert^2\,]\big)
	\\[0.3cm]
	&& \!\!\!\!\!\!\!\!\!\!\!\!\!\!
	+\,
	\dfrac{4\,\mathbb{E}[\,\text{Err}'(\hat{x}_{i,k},\hat{y}_k)\,]}{L_y \eta_kT_k } \,+\, \dfrac{2 \eta_k\, {\color{black}c}^2 \tau^2}{T_k} \,+\, \dfrac{2{\color{black}r c}\,(\tau+1) }{T_k} \,+\,\eta_k\, {\color{black}c}^2.
	\ea
	\ee
	
	We note the restarting scheme of Algorithm~\ref{alg.main}, i.e., $x_{j,k}(1) = \hat{x}_{j,k-1}$ and $y_{j,k}(1) = \hat{y}_{j,k-1}$. By~\eqref{eq.cc1},~\eqref{eq.cc2}, and Lemma~\ref{lem:objective2PD}, we have   
	\[
	\ba{rcl}
	\mathbb{E}[\,\Vert x_{j,k}(1)-x^\star \Vert^2\,]
	&\!\! =\!\! & 
	\mathbb{E}[\,\Vert \hat{x}_{j,k-1}-x^\star \Vert^2\,]
	\\[0.2cm]
	&\!\! \leq\!\! & 
	\dfrac{2}{L_x}\, \mathbb{E}[\,\text{Err}(\hat{x}_{j,k-1})\,]
	\\[0.25cm]
	&\!\! \leq\!\! & 
	\dfrac{2}{L_x} \,\mathbb{E}[\,\text{Err}'(\hat{x}_{j,k-1},\hat{y}_{k-1})\,]
	\ea
	\]
	\[
	\ba{rcl}
	\dfrac{1}{N}\displaystyle\sum_{j\,=\,1}^{N}\mathbb{E}[\,\Vert y_{j}^\star - y_{j,k}(1) \Vert^2\,]
	&\!\! =\!\! & 
	\dfrac{1}{N}\displaystyle\sum_{j\,=\,1}^{N}\mathbb{E}[\,\Vert y_{j}^\star - \hat{y}_{j,k-1}\Vert^2\,]
	\\[0.2cm]
	&\!\! \leq\!\! &
	\dfrac{2}{L_y} \,\mathbb{E}[\,\text{Err}'(\hat{x}_{i,k-1},\hat{y}_{k-1})\,]. 
	\ea
	\] 
	Combining above inequalities with the bound on $\mathbb{E}[\,\normalfont\text{MIX} \,]$ yields a bound on $\mathbb{E}[\, \text{Err}'(\hat{x}_{i,k},\hat{y}_k) \,]$. Finally, we utilize \tc{black}{$L_x \eta_k T_k \geq 16$} to finish the proof. 
\end{proof}

	\vspace*{-2ex}
\subsection{Proof of Theorem~\ref{thm.main}}
\label{sec.thm}

The proof is based on the result in Lemma~\ref{lem:recursivesurrogate}. We leave the mixing time $\tau$ to be determined so that it works for every round $k$ and focus on the averaged surrogate gap $\overline{\text{Err}}_k' \DefinedAs\frac{1}{N}\sum_{j\,=\,1}^{N}\mathbb{E}[\,\text{Err}'(\hat{x}_{j,k},\hat{y}_k)\,]$. 

Let
	\[
	H_k 
	\,\DefinedAs\, 
	\dfrac{4\,{\color{black}c}^2}{L_y} \l(\dfrac{{\color{black}8}}{\sqrt{T_k-\tau}}  + \dfrac{1}{ T_k}\r)^2
	\]  
	\[
	\ba{rcl}
	E_k 
	& \!\! \DefinedAs\!\!  & 
	{\color{black}8\, ( 4{\color{black}r}L  +  6{\color{black}c} ) \,\Delta_k}
	\,+\,
	16\, \eta_k {\color{black}c}\, ({\color{black}r}L + {\color{black}c})\,\tau
	\,+\,\dfrac{8{\color{black}r c}\,(\tau + 1) }{T_k} \,+\, 4\,\eta_k {\color{black}c}^2.
	\ea
	\] 
	We first apply Lemma~\ref{lem.netavearaging} to obtain {\color{black}$\mathbb{E}[\,\normalfont\text{NET}'\,] \leq 2(4{\color{black}r}L + 6{\color{black}c}) \Delta_k$} for each round $k$. With previous simplified notation, we apply this inequality for the bound in Lemma~\ref{lem:recursivesurrogate}, and then take average over $i=1,\ldots,N$ on both sides to obtain,
	\be
	\label{eq:one-inner-interation-bound2}
	\overline{\text{Err}}_k'  
	\,\leq\,
	  H_k 
	  \,+\, 
	  E_k
	\,+\,
	\dfrac{ 8\,\eta_k\, {\color{black}c}^2\, \tau^2}{T_k} 
	\,+\,
	 \dfrac{\overline{\text{Err}}_{k-1}'}{\tc{black}{L'} \,\eta_k\, T_k}  
	\ee
	where $1/{\color{black}L'}\DefinedAs 16/L_y+8/L_x$.
	In Algorithm~\ref{alg.main}, we have updates $\eta_k = \eta_{k-1}/2$ and $T_k = 2T_{k-1}$. Since $\eta_1\geq 4/{\color{black}L'}$ and $T_1\geq 1$, we have 
	${\color{black}L'} \eta_kT_k\geq 4$ for all $k\geq 1$. Clearly, the assumption $L_x \eta_k T_k\geq 16$ holds in Lemma~\ref{lem:recursivesurrogate}. Therefore,~\eqref{eq:one-inner-interation-bound2} can be simplified into
	\be
	\label{eq:one-inner-interation-bound3}
	\overline{\text{Err}}_{k}' 
	\,\leq\,
	 H_k 
	 \,+\,
	  E_k  
	  \,+\,
	   \dfrac{ 8\, \eta_k\, {\color{black}c}^2\, \tau^2}{T_k} 
	   \,+\,
	    \frac{\overline{\text{Err}}_{k-1}'}4.
	\ee
	Simple comparisons show that $H_k\leq H_{k-1}/2$ and $E_k \geq E_{k-1}/2$. Thus, both $\eta_k/T_k= (1/4)^{k-1}\eta_1/T_1$ and $H_k\leq H_{1}/2^{k-1}$ are true for all $k\geq 1$. If we set $\tau = 1+\lceil\,\log(\Gamma T)/|\log\rho|\,\rceil\leq T_1$ with suitable $T_1$ and $K$, Lemma~\ref{lem:recursivesurrogate} applies at any round $k$. 
	Starting from the final round $k=K$, we repeat~\eqref{eq:one-inner-interation-bound3} to obtain,
	\[
	\ba{rcl}
	\!\!\overline{\text{Err}}_{K}' &\!\! \leq\!\! & H_K\,+\,E_K \,+\, \dfrac{2}{4^{K-2}}\dfrac{\eta_1\, {\color{black}c}^2\, \tau^2}{T_1} \,+\, \dfrac{\overline{\text{Err}}_{K-1}'}4
	\\[0.3cm]
	&\!\! \leq \!\! & \dfrac{1}{2^{K-2}}\displaystyle\sum_{k\,=\,1}^K\l(\frac12\r)^k\! H_{1} \,+\,\displaystyle\sum_{k\,=\,1}^K\l(\frac12\r)^k \!E_{K} 
	\\[0.3cm]
	&& \,+\, \dfrac{2K}{4^{K-2}}\dfrac{\eta_1\, {\color{black}c}^2\, \tau^2}{T_1} \,+\, \l(\dfrac14\r)^{K}\!\!\overline{\text{Err}}_0'
	\\[0.3cm]
	&\!\! \leq \!\! & \dfrac{8\,H_1 T_1}{T} \,+\, 2E_K \,+\,\dfrac{32\, K\, \eta_1 \,{\color{black}c}^2 \,\tau^2 \,T_1}{T^2} \,+\, \dfrac{{\color{black}r}^2 T_1^2}{T^2}
	\ea
	\]
	where $T=\sum_{k\,=\,1}^{K}T_k = (2^K-1)T_1\leq 2^K T_1$ and $\overline{\text{Err}}_0'\leq\overline{\text{Err}}_{0}=\sup_{x\,\in\,\mathcal X,y \,\in\, \mc Y}\|(x,y)\|^2\leq {\color{black}r}^2$.
	We now substitute $H_1$ and $E_K$ into this bound and bound $\overline{\text{Err}}_{K}' $ by 
	\[
	\begin{array}{l}
	 \dfrac{32\, {\color{black}c}^2 T_1}{L_y \,T} \l(\dfrac{ {\color{black}8}}{\sqrt{T_1-\tau}}  + \dfrac{1}{ T_1}\r)^2 \,+\, {\color{black}16\,(4{\color{black}r}L + 6{\color{black}c}) \, \Delta_K}
	\\[0.3cm]
	 \,+\, \dfrac{32 \,K\eta_1\, {\color{black}c}^2\, \tau^2\, T_1}{T^2}  \,+\, \dfrac{{\color{black}r}^2 T_1^2}{T^2} \,+\, \dfrac{16\,{\color{black}rc}\,(\tau + 1)}{T_K} 
	\\[0.3cm]
	\,+\, 32\,\eta_K {\color{black}c}\, (2{\color{black}r}L + \sqrt{2}{\color{black}c})\tau \,+\,  8\,\eta_K {\color{black}c}^2.
	\end{array}
	\]
	
	Since $T\leq 2^K T_1$, we have $\eta_K=\eta_1/2^K\leq \eta_1T_1/T$ and $T_K=2^{K-1}T_1\geq T/2$. Since $T = (2^K-1)T_1\geq 2^{K-1} T_1$, we have $K\leq 1+\log(T/T_1)$. Thus, $\sum_{k\,=\,1}^{K}\eta_kT_{k}=K\eta_1 T_1\leq  \eta_1T_1 (1+\log(T/T_1))$. Therefore, the above bound has the following order,
	\[
	C_1\,\dfrac{{\color{black}c}\,({\color{black}c}+{\color{black}r}L)\log^2(\sqrt{N}\,T)}{T\,(1 - \sigma_2(W))} \,+\, C_2\,\dfrac{ {\color{black}c}\,({\color{black}c} + {\color{black}r}L) (1 +  T_1)}{T}
	\]
	where $C_1$ are $C_2$ are absolute constants.  
	Since $\overline{\text{Err}}_{K}\leq\overline{\text{Err}}_{K}'$, it also bounds $\overline{\text{Err}}_{K}$. {\color{black}The proof is completed by combining $\eta_1\geq 4/L'$ with $T_1\geq \tau$.}

	\vspace*{-2ex}
\section{Computational experiments}\label{sec.comp}

\tc{black}{We first utilize a modified Mountain Car Task~\cite[Example~10.1]{sutton2018reinforcement} for multi-agent policy evaluation problem.} We generate the dataset using the approach presented in~\cite{wai2018multi}, obtain a policy by running SARSA with $d=300$ features, and sample the trajectories of states and actions according to the policy. The discount factor is set to $\gamma=0.95$. We simulate the communication network with $N$ agents using the Erd\H{o}s-R\'{e}nyi graph with connectivity $0.1$. At every time instant, each agent observes a local reward that is generated as a random proportion of the total reward. Since the stationary distribution $\Pi$ is unknown, we use sampled averages from the entire dataset to compute sampled versions $\hat{A}$, $\hat{b}$, and $\hat{C}$ of $A$, $b$, and $C$. We then formulate an empirical MSPBE as $(1/2) \Vert \hat{A}x-  \hat{b}\Vert^2_{\hat{C}^{-1}}$ and compute the optimal empirical MSPBE. We use this empirical MSPBE as an approximation of the population MSPBE to calculate the optimality gap. The dataset contains $85453$ samples and we run our online algorithm over one trajectory of $30000$ samples using multiple passes. \tc{black}{We set an initial restart time to $T_1=10^5$ and a restart round to $K=4$ to insure $T_1 \simeq O(K+\log T_1)$, take large bounds for Euclidean projections, and choose different learning rates $\eta$.}  

We compare \tc{black}{the performance of} Algorithm~\ref{alg.main} (DHPD) with stochastic primal-dual (SPD) algorithm under different settings. For $N=1$, SPD corresponds to GTD algorithm in~\cite{liu2015finite, wang2017finite, touati2017convergent}, and for $N>1$, SPD becomes the multi-agent GTD algorithm~\cite{lee2018primal}. We show computational results for $N=1$ and $N=10$ in Fig.~\ref{fig.compare1} and Fig.~\ref{fig.compare100}, respectively. The optimality gap is the difference between empirical MSPBE and the optimal one. \tc{black}{Our algorithm achieves a smaller optimality gap} than SPD in all cases, thereby demonstrating its sample efficiency. \tc{black}{In computational experiments with simple diminishing stepsizes, the algorithm converges relatively slow as is typical in stochastic optimization. Also, our online algorithm competes well against the approach that utilizes pre-collected i.i.d.\ samples (instead of true i.i.d.\ samples from the stationary distribution) in a fixed buffer. }

\tc{black}{In our second computational experiment, we test randomly generated multi-agent MDPs for a ring network with $N = 5$ agents that utilize a fixed policy~\cite[Example~1]{lee2018primal}. This leads to a Markov chain with $\mathcal{S} = \{1, 2, 3, 4 \}$, $\gamma=0.95$, $\phi(s) = [\, \phi_1(s) \;\, \phi_2(s) \;\, \phi_3(s)\;\, \phi_4(s) \,]^T \in \mathbb{R}^4$ where $\phi_i(s) = {\rm e}^{-{(s-i)}^2}$, and ${R}_j(s) = \mathds{1}({ s = 4})$. In Fig.~\ref{fig.mmdps}, we demonstrate that our online algorithm with an initial restart time $T_1=20000$ and a restart round $K=3$ outperforms SPD algorithm that utilizes diminishing stepsizes or a replay buffer.}

\begin{figure}[]
	\centering
	\begin{tabular}{cc}
		\rotatebox{90}{\quad\quad\quad\quad\quad\quad\quad\small Optimality gap} 
		&
		\hspace*{-0.3cm} 
		{\label{} \includegraphics[scale=0.6]{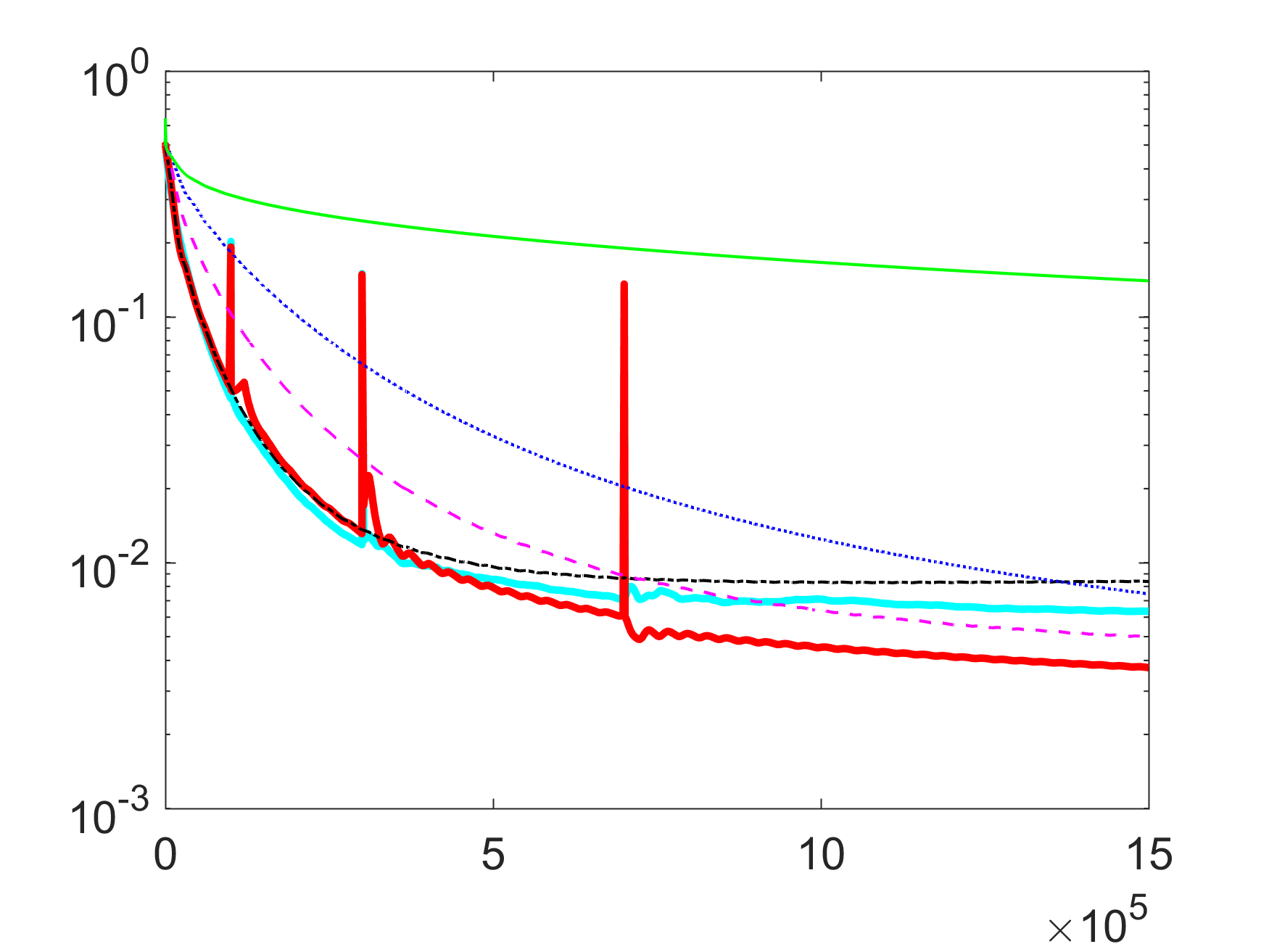}}
		\\
		{} 
		& { Number of iterations}
	\end{tabular}
	\caption{ {\color{black}Performance} comparison for the centralized problem with $N=1$. Our algorithm with stepsize $\eta = 0.1$ and $K=4$ ({\large\color{red}$\textbf{--\!--}$}) \tc{black}{achieves a smaller} optimality gap than stochastic primal-dual algorithm with stepsize: $\eta=0.1$ ($\text{-}$$\cdot$$\text{-}$), $\eta=0.05$ ({\color{magenta}$\text{-}\text{-}\text{-}$}), $\eta=0.025$ ({\color{black}$\cdot$$\cdot$$\cdot$$\cdot$}), and $\eta = 1/\sqrt{t}$ ({\color{green}$\text{--\!--}$}). {\color{black}It also provides a smaller optimality gap than the approach that utilizes pre-collected} i.i.d.\ samples in a buffer  ({\large\color{figcolor1}$\textbf{--\!--}$}). }
	\label{fig.compare1}
\end{figure}

\begin{figure}[]
	\centering
	\begin{tabular}{cc}
		\rotatebox{90}{\quad\quad\quad\quad\quad\quad\quad\small Optimality gap} 
		&
		\hspace*{-0.3cm} 
				{\label{} \includegraphics[scale=0.6]{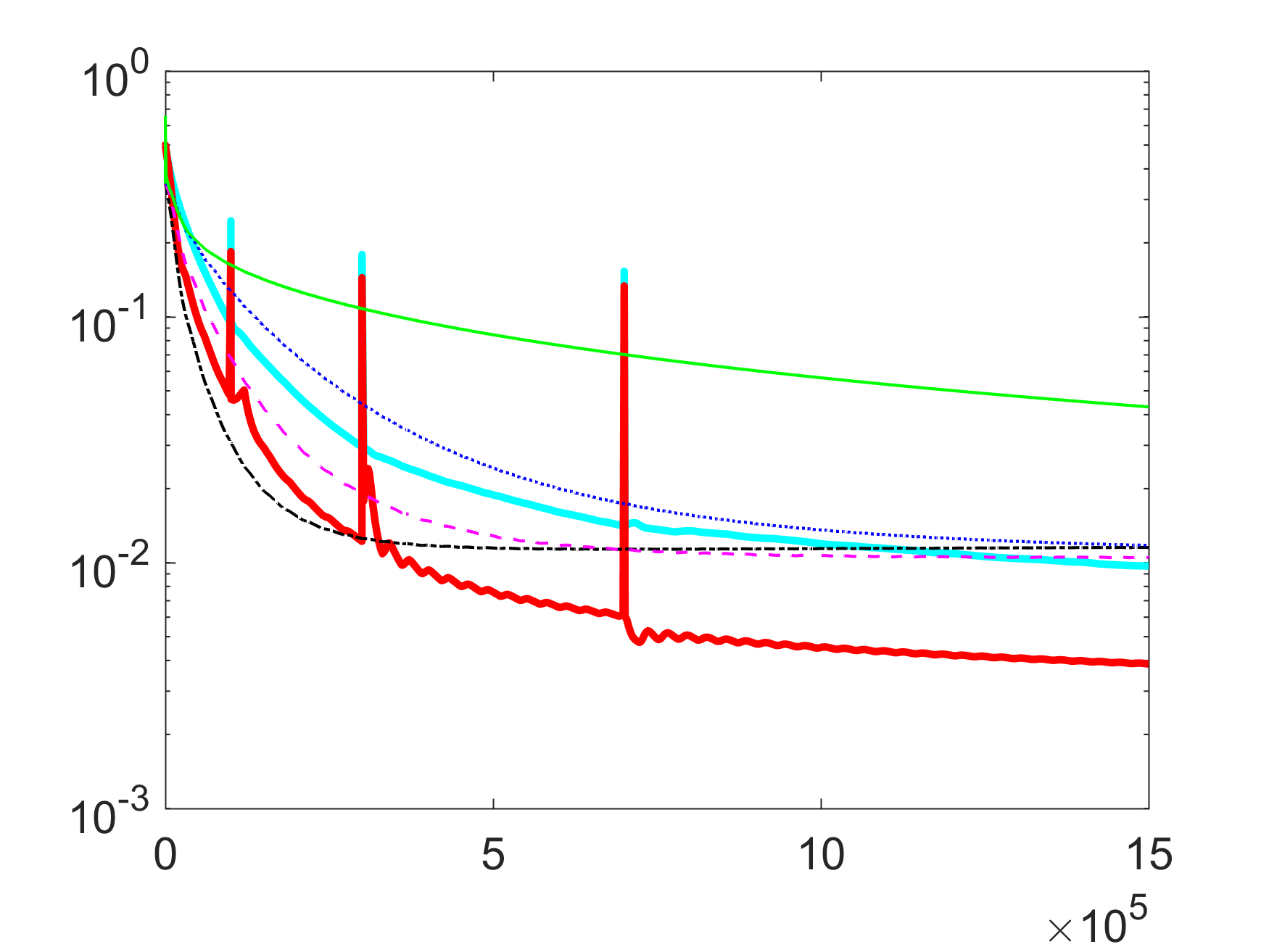}}
		\\
		{} 
		& { Number of iterations}
	\end{tabular}
	\caption{{\color{black}Performance} comparison for the distributed case with $N=10$. Our algorithm with stepsize $\eta = 0.05$ and $K=4$ ({\large\tc{red}{$\textbf{--\!--}$}}) {\color{black}achieves a smaller} optimality gap than stochastic primal-dual algorithm with stepsize: $\eta=0.05$ ($\text{-}$$\cdot$$\text{-}$), $\eta=0.025$ ({\color{magenta}$\text{-}\text{-}\text{-}$}), $\eta=0.0125$ ({\color{black}$\cdot$$\cdot$$\cdot$$\cdot$}), and $\eta = 1/\sqrt{t}$ ({\color{green}$\text{--\!--}$}).  {\color{black} It also provides a smaller optimality gap than the approach that utilizes pre-collected} iid samples in a buffer  ({\large\color{figcolor1}$\textbf{--\!--}$}).  }
	\label{fig.compare100}
\end{figure}

\begin{figure}[]
	\centering
	\begin{tabular}{cc}
		\rotatebox{90}{\quad\quad\quad\quad\quad\quad\quad\small Optimality gap} 
		&
		\hspace*{-0.3cm}  
		{\label{} \includegraphics[scale=0.6]{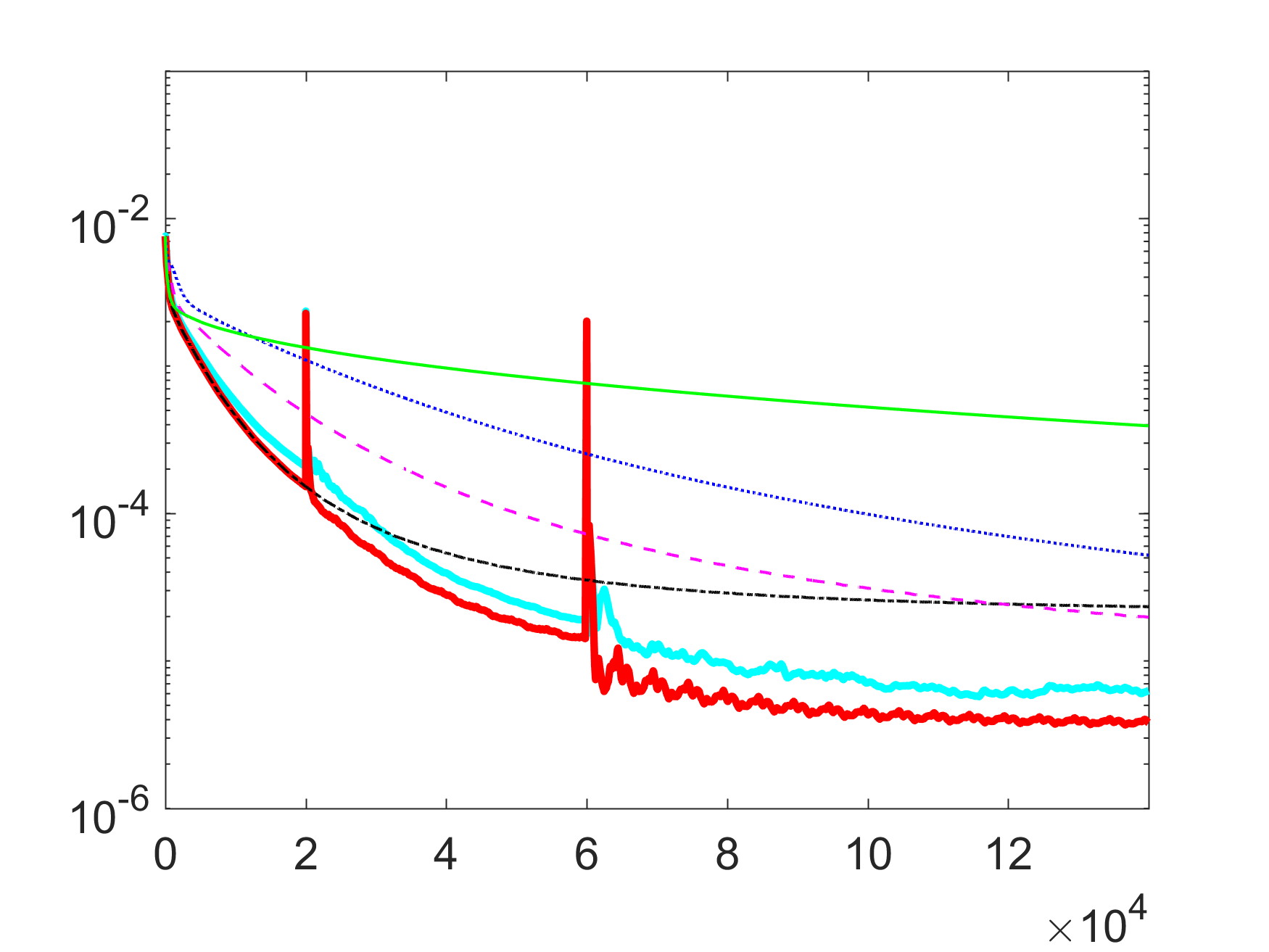}}
		\\
		{} 
		& { Number of iterations}
	\end{tabular}
	\caption{ {\color{black}Performance} comparison for the distributed case with $N=5$. Our algorithm with stepsize $\eta = 0.1$ and $K=3$ ({\large\color{red}$\textbf{--\!--}$}) {\color{black}achieves a smaller} optimality gap than stochastic primal-dual algorithm with stepsize: $\eta=0.5$ ($\text{-}$$\cdot$$\text{-}$), $\eta=0.25$ ({\color{magenta}$\text{-}\text{-}\text{-}$}), $\eta=0.125$ ({\color{black}$\cdot$$\cdot$$\cdot$$\cdot$}), and $\eta = 1/\sqrt{t}$ ({\color{green}$\text{--\!--}$}). {\color{black} It also provides a smaller optimality gap than the approach that utilizes pre-collected} iid samples in a buffer  ({\large\color{figcolor1}$\textbf{--\!--}$}).  }
	\label{fig.mmdps}
\end{figure}

	\vspace*{-2ex}
\section{Concluding remarks}\label{sec.concl}

In this paper, we formulate the multi-agent temporal-difference learning as a distributed primal-dual stochastic saddle point problem. We propose a new \tc{black}{online} distributed homotopy-based primal-dual algorithm \tc{black}{for minimizing the mean square projected Bellman error under the Markovian setting and establish an $O(1/T)$ error bound}. Our result improves the best known $O(1/\sqrt T)$ \tc{black}{error bound} for general stochastic primal-dual algorithms and it demonstrates that distributed saddle point programs can be solved \tc{black}{efficiently even in applications with limited time budgets}. 

Our results add to the growing body of literature on multi-agent reinforcement learning. The developed framework motivates further study including: 
(i) Our distributed policy evaluation is based on the linear function approximation. It is of interest to study nonlinear function approximators, e.g., neural networks. \tc{black}{Convex-concave property of the primal-dual formulation no longer holds in this setup and different analysis is required. (ii) It is of interest to examine transient performance of a distributed  algorithms that, instead of our consensus-based approach, utilize a network diffusion strategy~\cite{sayed2014adaptation}.} (iii) Our algorithm requires synchronous communication over a network with doubly stochastic matrices. This can be restrictive in applications \tc{black}{that involve directed networks}, communication delays, and time-varying channels. It is thus relevant to examine the effect of alternative communication schemes. (iv) Our multi-agent MDP assumes that agents function without any failures. In applications, it is of interest to examine a setup in which agents may experience communication/computation failures with some of them acting maliciously. 

	\vspace*{-2ex}
\appendix

	\vspace*{-1ex}
\subsection{Proof of Lemma~\ref{lem.netavearaging}}
\label{sec.proof.netavearaging}

{\color{black}We begin with the triangle inequality, 
$
\Vert x_{j,k}(t) -\bar{x}_k(t) \Vert 
\leq
\Vert x_{j,k}(t) -\underbar{$x$}_k(t) \Vert + \Vert \underbar{$x$}_k(t) - \bar{x}_k(t) \Vert
$ 
where 
$\underbar{$x$}_k(t) = P_{\mc X}(\bar{x}_k'(t))$ and $\bar{x}_k(t) = \frac{1}{N}\sum_{j\,=\,1}^{N}{x}_{j,k}(t))$.
First, the non-expansiveness of projection $P_{\mc X}$ implies that 
\begin{equation}\label{eq.decomp0}
\Vert x_{j,k}(t) -\underbar{$x$}_k(t) \Vert 
\, \leq \, 
\Vert x'_{j,k}(t) -\bar{x}'_k(t) \Vert
\end{equation}
where $x_{j,k}(t) = P_{\mc X}(x_{j,k}'(t))$.
Second, by the convexity of norm $\Vert\cdot\Vert$ and non-expansiveness of projection $P_{\mc X}$, 
\begin{equation}\label{eq.decomp1}
\Vert \underbar{$x$}_k(t) - \bar{x}_k(t) \Vert
\, \leq \,
\frac{1}{N}\sum_{j\,=\,1}^{N} \Vert  P_{\mc X}(\bar{x}_k'(t)) - P_{\mc X}(x_{j,k}'(t)) \Vert 
\, \leq \,
\frac{1}{N}\sum_{j\,=\,1}^{N}  \Vert x'_{j,k}(t) -\bar{x}'_k(t) \Vert. 
\end{equation}
}
Next, we focus on $ x'_{j,k}(t)$ and $\bar{x}'_k(t)$.
Let $[W^s]_j$ be the $j$th row of $W^s$ and $[W^s]_{ji}$ be the $(j,i)$th element of $W^s$. 
For any $t\geq 2$, the primal update  $x_{j,k}'(t) $ of Algorithm~\ref{alg.main} can be expressed as 
\be
\ba{rcl}\label{eq:recursion}
x_{j,k}'(t) &\!\! =\!\! &\displaystyle\sum_{i\,=\,1}^N [W^{t-1}]_{ij} \,x_{i,k}'(1) 
\\[0.2cm]
&&\!\!\!\!\!\!\!\!\!\!\!\!\!\!\!\!\!\!\!\!\!\!\!\!\!\!\!\!\! -\, \eta_k \displaystyle\sum_{s\,=\,2}^{t-1}\sum_{i\,=\,1}^N[W^{t-s+1}]_{ij}\,
G_{i,x}(z_{i,k}(s-1);\xi_{k,s-1})  
\\[0.2cm]
&&\!\!\!\!\!\!\!\!\!\!\!\!\!\!\!\!\!\!\!\!\!\!\!\!\!\!\!\!\! -\, \eta_k\, G_{j,x}(z_{j,k}(t-1);\xi_{k,t-1})
\ea
\ee
and $x_{j,k}'(2) = \sum_{i=1}^N[W]_{ij}x_{i,k}'(1) - \eta_k G_{j,x}(z_{j,k}(1);\xi_{k,1})$. 
Similar to the argument of~\cite[Eqs. (26) and (27)]{duchi2012dual}, we utilize the gradient boundedness to bound the difference of~\eqref{eq:simple-update} and~\eqref{eq:recursion} by
\be\label{eq:indi-diff-bound}
\ba{rcl}
\!\!\!\!\l\Vert x_{j,k}'(t) \!-\! \bar{x}_k'(t) \r\Vert
&\!\! \leq\!\! & \displaystyle \sum_{i\,=\,1}^N\l\vert \frac1N - [W^{t-1}]_{ij}\r\vert  \Vert x'_{i,k}(1)\Vert  
\\[0.2cm]
&& \!\!\!\!\!\!\!\!\!\!\!\!\!\!\!\!\!\!\!\!\!\!\!\!\!\!\!\!\!\! +\,
\eta_k {\color{black}c} \displaystyle\sum_{s\,=\,2}^{t-1}\l\Vert \dfrac1N\mathbf{1} - [W^{t-s+1}]_j \r\Vert_1 \,+\, 2\,\eta_k {\color{black}c}.
\ea
\ee
Application of the Markov chain property of mixing matrix~\cite{duchi2012dual} on the second sum in~\eqref{eq:indi-diff-bound}  yields,
\begin{equation}\label{eq:mixing-bound}
\sum_{s\,=\,2}^{t-1}\l\Vert \frac1N\mathbf{1} - [W^{t-s+1}]_j \r\Vert_1 
\,\leq\,
 \frac{2\log(\sqrt{N}\,T_k)}{1-\sigma_2(W)}.
\end{equation}
For $x_{i,k}'(1)=\frac{1}{T_{k-1}}\sum_{t=1}^{T_{k-1}}x_{i,k-1}(t)$ where $x_{i,k-1}(t) = P_{\mc X}(x_{i,k-1}'(t))$ and $0\in\mc X$, we utilize the non-expansiveness of projection to bound it as
\[
\Vert x_{i,k}'(1)\Vert  
\,\leq\, 
\dfrac{1}{T_{k-1}}\displaystyle\sum_{t\,=\,1}^{T_{k-1}} \l\Vert x_{i,k-1}'(t) \r\Vert.
\]
Using~\eqref{eq:recursion} at round $k-1$, we utilize the property of doubly stochastic $W$  to have  
\begin{equation*}\label{eq:intermediate-bound}
\Vert x_{i,k-1}'(t)\Vert 
\,\leq\,
 \displaystyle\sum_{j\,=\,1}^N [W^{t-1}]_{ji}\,\Vert x_{j,k-1}'(1)\Vert \,+\, 2 \eta_{k-1}T_{k-1} {\color{black}c}.
\end{equation*}
Repeating this inequality for $k-2, k-3, \ldots,1$ yields,
\begin{equation}
\label{eq:zero-bound}
\Vert x_{i,k}'(1)\Vert 
\,\leq\,
 2\displaystyle \sum_{l\,=\,1}^{k-1}\eta_l \,T_l\, {\color{black}c}.
\end{equation}
where we use $x_{j,1}'(1) = 0$ for all $j\in\mc V$.

Now, we are ready to show the desired result. 
Notice that $  \Vert x'_{j,k}(1) - \bar{x}'_k(1)\Vert \leq  \Vert x'_{j,k}(1)\Vert + \frac{1}{N}\sum_{j\,=\,1}^{N}  \Vert x'_{j,k}(1)\Vert $. 
We collect~\eqref{eq:mixing-bound} and~\eqref{eq:zero-bound} for~\eqref{eq:indi-diff-bound}, and average it over $t=1,\ldots,T_k$ to obtain,
\[
\begin{array}{rcl}
&&\!\!\!\!\!\!\!\!\!\! \dfrac{1}{T_k} \displaystyle\sum_{t\,=\,1}^{T_k} \Vert x'_{j,k}(t) - \bar{x}'_k(t)\Vert
\\[0.2cm]
& \!\! =\!\!  & 
\dfrac{1}{T_k} \displaystyle\sum_{t\,=\,2}^{T_k} \Vert x'_{j,k}(t) - \bar{x}'_k(t)\Vert \,+\, \dfrac{1}{T_k} \Vert x'_{j,k}(1) - \bar{x}'_k(1)\Vert 
\\[0.2cm]
&\!\! \leq\!\! &  
\dfrac{2\eta_k{\color{black}c}\log(\sqrt{N}T_k)}{1-\sigma_2(W)} 
 \,+\, \dfrac{4}{T_k}\displaystyle\sum_{l\,=\,1}^{k-1}\eta_lT_{l}{\color{black}c} 
 \,+\, 2\eta_k {\color{black}c}
\\[0.2cm]
&&
\,+\,  \dfrac{2{\color{black}c}}{T_k}\displaystyle\sum_{t\,=\,2}^{T_k}\sum_{i\,=\,1}^N \l\vert \dfrac1N - [W^{t-1}]_{ji}\r\vert \sum_{l\,=\,1}^{k-1}\eta_l T_{l}.
\end{array}
\]
Bounding the sum $\sum_{t\,=\,2}^{T_k}\sum_{i\,=\,1}^N\l\vert \frac1N - [W^{t-1}]_{ji}\r\vert $ by~\eqref{eq:mixing-bound} and application of~\eqref{eq.decomp0} and~\eqref{eq.decomp1} complete the proof. 

	\vspace*{-1ex}
\subsection{Martingale concentration bound}
\label{app.A}

We state a useful result about martingale sequence.

\begin{lemma}\label{lem:martingale}
	Let $\{X(t)\}_{t=1}^{T}$ be a martingale difference sequence in $\mb R^d$, and let $\Vert X(t)\Vert\leq M$. Then, 
	\begin{equation}\label{eq:martingale}
	\expect{\,\l\Vert \dfrac{1}{T}\displaystyle\sum_{t\,=\,1}^{T}X(t)\r\Vert^2\,} 
	\,\leq\, 
	\dfrac{4M^2}{T}.
	\end{equation}
\end{lemma}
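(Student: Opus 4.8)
The plan is to expand the squared norm, use orthogonality of the martingale increments to kill the cross terms, and bound the diagonal terms by $M^2$. Concretely, set $S_T \DefinedAs \sum_{t=1}^{T}X(t)$ and write
\[
\Vert S_T\Vert^2 \,=\, \sum_{t\,=\,1}^{T}\Vert X(t)\Vert^2 \,+\, 2\sum_{1\,\le\,s\,<\,t\,\le\,T}\dotp{X(s)}{X(t)}.
\]
First I would fix a pair $s<t$ and condition on the $\sigma$-field $\mathcal F_{t-1}$ with respect to which $\{X(t)\}$ is a martingale difference sequence. Since $X(s)$ is $\mathcal F_{t-1}$-measurable (as $s\le t-1$) and $\expect{X(t)\mid\mathcal F_{t-1}}=0$, the tower property gives $\expect{\dotp{X(s)}{X(t)}}=\expect{\dotp{X(s)}{\expect{X(t)\mid\mathcal F_{t-1}}}}=0$. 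Hence every cross term vanishes in expectation and $\expect{\Vert S_T\Vert^2}=\sum_{t=1}^{T}\expect{\Vert X(t)\Vert^2}$.

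Next I would invoke the uniform bound $\Vert X(t)\Vert\le M$ to obtain $\expect{\Vert S_T\Vert^2}\le TM^2$, and divide by $T^2$:
\[
\expect{\,\Big\Vert \tfrac{1}{T}\sum_{t\,=\,1}^{T}X(t)\Big\Vert^2\,} \,=\, \frac{1}{T^2}\,\expect{\Vert S_T\Vert^2}\,\le\,\frac{M^2}{T}\,\le\,\frac{4M^2}{T},
\]
which is exactly~\eqref{eq:martingale} (in fact with the sharper constant $1$ in place of $4$; the looser constant is retained only for uniformity with the other estimates used in the paper).

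There is essentially no genuine obstacle here: the single point requiring (mild) care is the measurability bookkeeping, namely ensuring that the martingale-difference property is read with respect to a filtration $\{\mathcal F_t\}$ for which each $X(s)$ with $s\le t-1$ is $\mathcal F_{t-1}$-measurable — which is precisely what "martingale difference sequence" means. If one prefers not to reference a filtration explicitly, the same computation may be carried out coordinate-wise, using that each scalar component of $\{X(t)\}$ is itself a (scalar) martingale difference sequence and that $\Vert v\Vert^2=\sum_i v_i^2$, which reduces the claim to the classical orthogonality of scalar martingale increments.
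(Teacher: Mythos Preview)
Your argument is correct and in fact yields the sharper constant $M^2/T$; the orthogonality-of-increments computation is the natural way to bound a second moment of a martingale-difference sum, and the measurability remark you make is exactly the point that needs checking.

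The paper, however, proceeds quite differently: it invokes a vector-valued concentration inequality of Pinelis, namely $P\big(\Vert \tfrac{1}{T}\sum_t X(t)\Vert^2 \ge 4M^2\delta/T\big)\le e^{-\delta}$, and then recovers the expectation by tail integration, $\mathbb{E}[Y]=\int_0^\infty P(Y\ge s)\,ds$, with the change of variable $s=4M^2\delta/T$. This is where the constant $4$ comes from. Your route is strictly more elementary and tighter for the stated second-moment claim; the paper's route is heavier machinery that would also deliver high-probability (sub-Gaussian) tail bounds, but since only the expectation is used downstream, that extra strength is not actually exploited. Either proof suffices for every later use of the lemma.
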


\begin{proof}
We recall the classic concentration result in~\cite{pinelis1994optimum}: for any $\delta\geq0$, we have
\[
P\l(\,\l\Vert \dfrac{1}{T}\sum_{t\,=\,1}^{T}X(t)\r\Vert^2 \, \geq \, \frac{4M^2\delta}{T}\,\r) \,\leq\, {\rm e}^{-\delta}.
\]

The left-hand side of~\eqref{eq:martingale} can be expanded into 
\begin{equation*}\label{lem:martingale.expectation}
\begin{array}{rcl}
&& \!\!\!\!\!\!\!\!\! \displaystyle\int_0^\infty P\l(\,\l\Vert \dfrac{1}{T}\sum_{t\,=\,1}^{T}X(t)\r\Vert^2\,\geq\, s\,\r)ds
\\[0.2cm]
& \!\! =\!\! & \dfrac{4M^2}{T}\displaystyle\int_0^\infty P\l(\l\Vert \dfrac{1}{T}\sum_{t=1}^{T}X(t)\r\Vert^2 \,\geq \,\frac{4M^2\delta}{T}\r)d\delta
\\[0.2cm]
&\!\! \leq\!\! &\dfrac{4M^2}{T}\displaystyle\int_0^\infty {\rm e}^{-\delta}d\delta
\; \leq \; \dfrac{4M^2}{T}.
\end{array}
\end{equation*}
\end{proof}

	\vspace*{-1ex}
\subsection{Proof of Lemma~\ref{lem:objective2PD}}
\label{sec.proof.objective2PD}

It is clear that $\text{Err}(\hat{x}_{i,k})\geq 0$ from the optimality of $x^\star$ in~\eqref{eq:objgap}. 
The optimality of $y_j^\star $ yields,
\[
\frac1N\sum_{j\,=\,1}^N\, \psi_j(x^\star,y_j^\star)
\,\geq\,
\frac1N\sum_{j\,=\,1}^N\, \psi_j(x^\star,\hat y_{j,k}).
\]
Thus, using~\eqref{eq:objgap1} and~\eqref{eq.surrogate}, we have    
$\text{Err}(\hat{x}_{i,k}) \leq \text{Err}'(\hat{x}_{i,k},\hat{y}_k)$.

	\vspace*{-2ex}
\subsection{Proof of Lemma~\ref{lem:convexconcave}}
\label{sec.proof.convexconcave}

{\color{black}To show~\eqref{eq.cc1}, we apply the strong convexity of $f_j(x)$,
	\[
	\text{Err}(\hat{x}_{i,k}) 
	\, \geq \,
	\frac{1}{N} \sum_{j\,=\,1}^{N} \dotp{\nabla f_j(x^\star)} {\hat x_{i,k} - x^\star}
	+
	\frac{L_x}{2} \Vert \hat x_{i,k} - x^\star \Vert^2
	\, \geq \,
	\frac{L_x}{2} \Vert \hat x_{i,k} - x^\star \Vert^2
	\]
	where the second inequality is due to the optimality of $x^\star$.}

To show~\eqref{eq.cc2}, we subtract and add $ \frac{1}{N}\sum_{j=1}^{N} \psi_j(x^\star,y_{j}^\star)$ into~\eqref{eq.surrogate} and apply the strong concavity of $\psi_j(x,y_{j})$ in $y_j$,
\begin{equation*}
\begin{array}{rcl}
\text{Err}'(\hat{x}_{i,k},\hat{y}_k)
& \!\! =\!\! & \text{Err}(\hat{x}_{i,k}) \,+\, \dfrac{1}{N}\!\displaystyle\sum_{j\,=\,1}^{N}\! (\psi_j(x^\star,y_{j}^\star)
-
\psi_j(x^\star,\hat{y}_{j,k}) )
\\[0.2cm]
& \!\! \geq\!\! & \dfrac{1}{N}\displaystyle\sum_{j\,=\,1}^{N} (\,\psi_j(x^\star,y_{j}^\star\,)
-
\psi_j(x^\star,\hat{y}_{j,k}) )
\\[0.2cm]
&\!\! \geq\!\! & \dfrac{L_y}{2N} \displaystyle\sum_{j\,=\,1}^{N}\Vert y_j^\star-\hat{y}_{j,k} \Vert^2.
\end{array}
\end{equation*}
where 
$\text{Err}(\hat{x}_{i,k})\geq 0$ is omitted in the first inequality, and {\color{black}the second inequality follows the strong concavity assumption of $\psi_j(x^\star,y_j)$ in $y_j$ and the optimality of $y_j^\star = \argmax_{y_j\,\in\,\mc Y} \psi_j(x^\star,y_j)$.}

To show~\eqref{eq.cc3}, because of the optimality of $x^\star$ and $ y_{j}^\star$, we first have  
$
\psi_j(\hat{x}_{i,k},y_{j}^\star)
-
\psi_j(x^\star,y_{j}^\star)
\geq 0
$, and 
$\psi_j(x^\star,y_{j}^\star)
-
\psi_j(x^\star,\hat{y}_{j,k})
\geq0$.
This further shows that 
$\psi_j(\hat{x}_{i,k},\hat y_{j,k}^\star)-\psi_j(x^\star,\hat{y}_{j,k}) 
=
(\psi_j(\hat{x}_{i,k},\hat y_{j,k}^\star)
-
\psi_j(\hat{x}_{i,k},y_{j}^\star))
+
(\psi_j(\hat{x}_{i,k},y_{j}^\star)
-
\psi_j(x^\star,y_{j}^\star))
+
(\psi_j(x^\star,y_{j}^\star)
-
\psi_j(x^\star,\hat{y}_{j,k}))
\geq
\psi_j(\hat{x}_{i,k},\hat y_{j,k}^\star)
-
\psi_j(\hat{x}_{i,k},y_{j}^\star)
$.
Hence, we have
\begin{equation*}
\begin{array}{rcl}
\text{Err}'(\hat{x}_{i,k},\hat{y}_k)
& \!\! \geq\!\! & \dfrac{1}{N}\displaystyle\sum_{j\,=\,1}^{N}\, (\,\psi_j(\hat{x}_{i,k},\hat y_{j,k}^\star)
-
\psi_j(\hat{x}_{i,k},y_{j}^\star)\,)
\\[0.2cm]
&\!\! \geq\!\! & \dfrac{L_y}{2N} \displaystyle\sum_{j\,=\,1}^{N}\Vert y_j^\star-\hat{y}_{j,k}^\star \Vert^2
\end{array}
\end{equation*}
where {\color{black}the second inequality follows from the strong concavity assumption of $\psi_j(\hat x_{i,k},y_j)$ in $y_j$ and the optimality of $\hat y_{j,k}^\star = \argmax_{y_j\,\in\,\mc Y} \psi_j(\hat x_{i,k}^\star,y_j)$.}






\begin{thebibliography}{10}
\providecommand{\url}[1]{#1}
\csname url@rmstyle\endcsname
\providecommand{\newblock}{\relax}
\providecommand{\bibinfo}[2]{#2}
\providecommand\BIBentrySTDinterwordspacing{\spaceskip=0pt\relax}
\providecommand\BIBentryALTinterwordstretchfactor{4}
\providecommand\BIBentryALTinterwordspacing{\spaceskip=\fontdimen2\font plus
\BIBentryALTinterwordstretchfactor\fontdimen3\font minus
  \fontdimen4\font\relax}
\providecommand\BIBforeignlanguage[2]{{%
\expandafter\ifx\csname l@#1\endcsname\relax
\typeout{** WARNING: IEEEtran.bst: No hyphenation pattern has been}%
\typeout{** loaded for the language `#1'. Using the pattern for}%
\typeout{** the default language instead.}%
\else
\language=\csname l@#1\endcsname
\fi
#2}}

\bibitem{sutton2018reinforcement}
R.~S. Sutton and A.~G. Barto, \emph{Reinforcement learning: {A}n
  introduction}.\hskip 1em plus 0.5em minus 0.4em\relax MIT press, 2018.

\bibitem{sutton1988learning}
R.~S. Sutton, ``Learning to predict by the methods of temporal differences,''
  \emph{Machine Learning}, vol.~3, no.~1, pp. 9--44, 1988.

\bibitem{bertsekas1996neuro}
D.~P. Bertsekas and J.~N. Tsitsiklis, \emph{Neuro-dynamic programming}.\hskip
  1em plus 0.5em minus 0.4em\relax Athena Scientific Belmont, MA, 1996, vol.~5.

\bibitem{baird1995residual}
L.~Baird, ``Residual algorithms: reinforcement learning with function
  approximation,'' in \emph{Proceedings of the International Conference on
  Machine Learning}, 1995, pp. 30--37.

\bibitem{mnih2015human}
V.~Mnih, K.~Kavukcuoglu, D.~Silver, A.~A. Rusu, J.~Veness, M.~G. Bellemare,
  A.~Graves, M.~Riedmiller, A.~K. Fidjeland, G.~Ostrovski, \emph{et~al.},
  ``Human-level control through deep reinforcement learning,'' \emph{Nature},
  vol. 518, no. 7540, p. 529, 2015.

\bibitem{silver2016mastering}
D.~Silver, A.~Huang, C.~J. Maddison, A.~Guez, L.~Sifre, G.~Van Den~Driessche,
  J.~Schrittwieser, I.~Antonoglou, V.~Panneershelvam, M.~Lanctot,
  \emph{et~al.}, ``Mastering the game of {G}o with deep neural networks and
  tree search,'' \emph{Nature}, vol. 529, no. 7587, p. 484, 2016.

\bibitem{misra2013residential}
S.~Misra, A.~Mondal, S.~Banik, M.~Khatua, S.~Bera, and M.~S. Obaidat,
  ``Residential energy management in smart grid: {A} {M}arkov decision
  process-based approach,'' in \emph{Proceedings of the IEEE International
  Conference on Green Computing and Communications and IEEE Internet of things
  and IEEE Cyber, Physical and Social Computing}, 2013, pp. 1152--1157.

\bibitem{kuyer2008multiagent}
L.~Kuyer, S.~Whiteson, B.~Bakker, and N.~Vlassis, ``Multiagent reinforcement
  learning for urban traffic control using coordination graphs,'' in
  \emph{Proceedings of the Joint European Conference on Machine Learning and
  Knowledge Discovery in Databases}, 2008, pp. 656--671.

\bibitem{pennesi2010distributed}
P.~Pennesi and I.~C. Paschalidis, ``A distributed actor-critic algorithm and
  applications to mobile sensor network coordination problems,'' \emph{IEEE
  Trans. Autom. Control}, vol.~55, no.~2, pp. 492--497, 2010.

\bibitem{kober2013reinforcement}
J.~Kober, J.~A. Bagnell, and J.~Peters, ``Reinforcement learning in robotics:
  {A} survey,'' \emph{Int. J. Robotics Res.}, vol.~32, no.~11, pp. 1238--1274,
  2013.

\bibitem{macua2014distributed}
S.~V. Macua, J.~Chen, S.~Zazo, and A.~H. Sayed, ``Distributed policy evaluation
  under multiple behavior strategies,'' \emph{IEEE Trans. Autom. Control},
  vol.~60, no.~5, pp. 1260--1274, 2014.

\bibitem{mathkar2016distributed}
A.~Mathkar and V.~S. Borkar, ``Distributed reinforcement learning via gossip,''
  \emph{IEEE Trans. Autom. Control}, vol.~62, no.~3, pp. 1465--1470, 2016.

\bibitem{lee2018primal}
D.~Lee and J.~Hu, ``Primal-dual distributed temporal difference learning,''
  \emph{arXiv preprint arXiv:1805.07918}, 2018.

\bibitem{wai2018multi}
H.-T. Wai, Z.~Yang, Z.~Wang, and M.~Hong, ``Multi-agent reinforcement learning
  via double averaging primal-dual optimization,'' in \emph{Proceedings of the
  Advances in Neural Information Processing Systems}, 2018, pp. 9649--9660.

\bibitem{cassano2018multi}
L.~Cassano, K.~Yuan, and A.~H. Sayed, ``Multi-agent fully decentralized value
  function learning with linear convergence rates,'' \emph{IEEE Trans. Autom.
  Control}, 2020.

\bibitem{doan2019convergence}
T.~Doan, S.~Maguluri, and J.~Romberg, ``Finite-time analysis of distributed
  {TD}(0) with linear function approximation on multi-agent reinforcement
  learning,'' in \emph{Proceedings of the International Conference on Machine
  Learning}, 2019, pp. 1626--1635.

\bibitem{doan2019finite}
T.~T. Doan, S.~T. Maguluri, and J.~Romberg, ``Finite-time performance of
  distributed temporal-difference learning with linear function
  approximation,'' \emph{SIAM Journal on Mathematics of Data Science}, vol.~3,
  no.~1, pp. 298--320, 2021.

\bibitem{sun2019finite}
J.~Sun, G.~Wang, G.~B. Giannakis, Q.~Yang, and Z.~Yang, ``Finite-time analysis
  of decentralized temporal-difference learning with linear function
  approximation,'' in \emph{Proceedings of the International Conference on
  Artificial Intelligence and Statistics}, 2020, pp. 4485--4495.

\bibitem{sha2020asynchronous}
X.~Sha, J.~Zhang, K.~Zhang, K.~You, and T.~Ba{\c{s}}ar, ``Asynchronous policy
  evaluation in distributed reinforcement learning over networks,'' \emph{arXiv
  preprint arXiv:2003.00433}, 2020.

\bibitem{tsitsiklis1997analysis}
J.~N. Tsitsiklis and B.~Van~Roy, ``An analysis of temporal-difference learning
  with function approximation,'' \emph{IEEE Trans. Automat. Control}, vol.~42,
  no.~5, pp. 674--690, 1997.

\bibitem{szepesvari2010algorithms}
C.~Szepesv{\'a}ri, \emph{Algorithms for Reinforcement Learning}.\hskip 1em plus
  0.5em minus 0.4em\relax Morgan and Claypool Publishers, 2010.

\bibitem{bradtke1996linear}
S.~J. Bradtke and A.~G. Barto, ``Linear least-squares algorithms for temporal
  difference learning,'' \emph{Machine learning}, vol.~22, no. 1-3, pp. 33--57,
  1996.

\bibitem{sutton2009convergent}
R.~S. Sutton, H.~R. Maei, and C.~Szepesv{\'a}ri, ``A convergent {O}$(n)$
  temporal-difference algorithm for off-policy learning with linear function
  approximation,'' in \emph{Proceedings of the Advances in Neural Information
  Processing Systems}, 2009, pp. 1609--1616.

\bibitem{sutton2009fast}
R.~S. Sutton, H.~R. Maei, D.~Precup, S.~Bhatnagar, D.~Silver,
  C.~Szepesv{\'a}ri, and E.~Wiewiora, ``Fast gradient-descent methods for
  temporal-difference learning with linear function approximation,'' in
  \emph{Proceedings of the International Conference on Machine Learning}, 2009,
  pp. 993--1000.

\bibitem{borkar2000ode}
V.~S. Borkar and S.~P. Meyn, ``The ode method for convergence of stochastic
  approximation and reinforcement learning,'' \emph{SIAM J. Control Optim.},
  vol.~38, no.~2, pp. 447--469, 2000.

\bibitem{dalal2018finite}
G.~Dalal, B.~Sz{\"o}r{\'e}nyi, G.~Thoppe, and S.~Mannor, ``Finite sample
  analyses for {T}{D} (0) with function approximation,'' in \emph{Proceedings
  of the Thirty-Second AAAI Conference on Artificial Intelligence}, 2018.

\bibitem{dalal2017finite}
G.~Dalal, G.~Thoppe, B.~Sz{\"o}r{\'e}nyi, and S.~Mannor, ``Finite sample
  analysis of two-time scale stochastic approximation with applications to
  reinforcement learning,'' in \emph{Proceedings of the Conference on Learning
  Theory}, 2018, pp. 1199--1233.

\bibitem{lakshminarayanan2018linear}
C.~Lakshminarayanan and C.~Szepesvari, ``Linear stochastic approximation: {H}ow
  far does constant step-size and iterate averaging go?'' in \emph{Proceedings
  of the International Conference on Artificial Intelligence and Statistics},
  2018, pp. 1347--1355.

\bibitem{bhandari2018finite}
J.~Bhandari, D.~Russo, and R.~Singal, ``A finite time analysis of temporal
  difference learning with linear function approximation,'' in
  \emph{Proceedings of the Conference on Learning Theory}, 2018, pp.
  1691--1692.

\bibitem{srikant2019finite}
R.~Srikant and L.~Ying, ``Finite-time error bounds for linear stochastic
  approximation and {TD} learning,'' in \emph{Proceedings of the Conference on
  Learning Theory}, 2019, pp. 1--28.

\bibitem{zou2019finite}
S.~Zou, T.~Xu, and Y.~Liang, ``Finite-sample analysis for {SARSA} and
  {Q}-learning with linear function approximation,'' in \emph{Proceedings of
  the Advances In Neural Information Processing Systems}, 2019, pp. 8665--8675.

\bibitem{hu2019characterizing}
B.~Hu and U.~Syed, ``Characterizing the exact behaviors of temporal difference
  learning algorithms using {M}arkov jump linear system theory,'' in
  \emph{Proceedings of the Advances in Neural Information Processing Systems},
  2019, pp. 8477--8488.

\bibitem{liu2015finite}
B.~Liu, J.~Liu, M.~Ghavamzadeh, S.~Mahadevan, and M.~Petrik, ``Finite-sample
  analysis of proximal gradient {T}{D} algorithms,'' in \emph{Proceedings of
  the Conference on Uncertainty in Artificial Intelligence}, 2015, pp.
  504--513.

\bibitem{nemirovski2009robust}
A.~Nemirovski, A.~Juditsky, G.~Lan, and A.~Shapiro, ``Robust stochastic
  approximation approach to stochastic programming,'' \emph{SIAM J. Optim.},
  vol.~19, no.~4, pp. 1574--1609, 2009.

\bibitem{wang2017finite}
Y.~Wang, W.~Chen, Y.~Liu, Z.-M. Ma, and T.-Y. Liu, ``Finite sample analysis of
  the {G}{T}{D} policy evaluation algorithms in {M}arkov setting,'' in
  \emph{Proceedings of the Advances in Neural Information Processing Systems},
  2017, pp. 5504--5513.

\bibitem{touati2017convergent}
A.~Touati, P.-L. Bacon, D.~Precup, and P.~Vincent, ``Convergent {TREE} {BACKUP}
  and {RETRACE} with function approximation,'' in \emph{Proceedings of the
  International Conference on Machine Learning}, 2018, pp. 4962--4971.

\bibitem{stankovic2016multi}
M.~S. Stankovi{\'c} and S.~S. Stankovi{\'c}, ``Multi-agent temporal-difference
  learning with linear function approximation: {W}eak convergence under
  time-varying network topologies,'' in \emph{Proceedings of the American
  Control Conference}, 2016, pp. 167--172.

\bibitem{boyd2005gossip}
S.~Boyd, A.~Ghosh, B.~Prabhakar, and D.~Shah, ``Gossip algorithms: {D}esign,
  analysis and applications,'' in \emph{Proceedings of the IEEE 24th Annual
  Joint Conference of the IEEE Computer and Communications Societies}, vol.~3,
  2005, pp. 1653--1664.

\bibitem{ren2021communication}
J.~Ren, J.~Haupt, and Z.~Guo, ``Communication-efficient hierarchical
  distributed optimization for multi-agent policy evaluation,'' \emph{J.
  Comput. Sci.}, vol.~49, p. 101280, 2021.

\bibitem{doan2019finited}
T.~Doan and J.~Romberg, ``Finite-time performance of distributed two-time-scale
  stochastic approximation,'' in \emph{Proceedings of the Learning for Dynamics
  and Control}, 2020, pp. 26--36.

\bibitem{gyorfi1996averaged}
L.~Gy{\"o}rfi and H.~Walk, ``On the averaged stochastic approximation for
  linear regression,'' \emph{SIAM J. Control Optim.}, vol.~34, no.~1, pp.
  31--61, 1996.

\bibitem{zhang2018fully}
K.~Zhang, Z.~Yang, H.~Liu, T.~Zhang, and T.~Basar, ``Fully decentralized
  multi-agent reinforcement learning with networked agents,'' in
  \emph{Proceedings of the International Conference on Machine Learning}, 2018,
  pp. 5867--5876.

\bibitem{zhang2019multi}
K.~Zhang, Z.~Yang, and T.~Ba{\c{s}}ar, ``Multi-agent reinforcement learning: A
  selective overview of theories and algorithms,'' \emph{arXiv preprint
  arXiv:1911.10635}, 2019.

\bibitem{zhang2019decentralized}
K.~Zhang, Z.~Yang, and T.~Ba{\c{s}}ar, ``Decentralized multi-agent
  reinforcement learning with networked agents: Recent advances,'' \emph{arXiv
  preprint arXiv:1912.03821}, 2019.

\bibitem{lee2019optimization}
D.~Lee, N.~He, P.~Kamalaruban, and V.~Cevher, ``Optimization for reinforcement
  learning: {F}rom a single agent to cooperative agents,'' \emph{IEEE Signal
  Process Mag.}, vol.~37, no.~3, pp. 123--135, 2020.

\bibitem{duchi2012dual}
J.~C. Duchi, A.~Agarwal, and M.~J. Wainwright, ``Dual averaging for distributed
  optimization: {C}onvergence analysis and network scaling,'' \emph{IEEE Trans.
  Autom. Control}, vol.~57, no.~3, pp. 592--606, 2012.

\bibitem{xiao2013proximal}
L.~Xiao and T.~Zhang, ``A proximal-gradient homotopy method for the sparse
  least-squares problem,'' \emph{SIAM J. Optim.}, vol.~23, no.~2, pp.
  1062--1091, 2013.

\bibitem{lee2018stochastic}
D.~Lee, H.~Yoon, V.~Cichella, and N.~Hovakimyan, ``Stochastic primal-dual
  algorithm for distributed gradient temporal difference learning,''
  \emph{arXiv preprint arXiv:1805.07918}, 2018.

\bibitem{agarwal2009information}
A.~Agarwal, M.~J. Wainwright, P.~L. Bartlett, and P.~K. Ravikumar,
  ``Information-theoretic lower bounds on the oracle complexity of convex
  optimization,'' in \emph{Proceedings of the Advances in Neural Information
  Processing Systems}, 2009, pp. 1--9.

\bibitem{duchi2012ergodic}
J.~C. Duchi, A.~Agarwal, M.~Johansson, and M.~I. Jordan, ``Ergodic mirror
  descent,'' \emph{SIAM J. Optim.}, vol.~22, no.~4, pp. 1549--1578, 2012.

\bibitem{ying2016stochastic}
Y.~Ying, L.~Wen, and S.~Lyu, ``Stochastic online auc maximization,''
  \emph{Advances in neural information processing systems}, vol.~29, pp.
  451--459, 2016.

\bibitem{dikkala2020minimax}
N.~Dikkala, G.~Lewis, L.~Mackey, and V.~Syrgkanis, ``Minimax estimation of
  conditional moment models,'' in \emph{Proceedings of the Advances in Neural
  Information Processing Systems}, 2020.

\bibitem{levin2017markov}
D.~A. Levin and Y.~Peres, \emph{Markov chains and mixing times}.\hskip 1em plus
  0.5em minus 0.4em\relax American Mathematical Soc., 2017, vol. 107.

\bibitem{puterman2014markov}
M.~L. Puterman, \emph{Markov decision processes: discrete stochastic dynamic
  programming}.\hskip 1em plus 0.5em minus 0.4em\relax John Wiley \& Sons,
  2014.

\bibitem{nedic2009distributed}
A.~Nedic and A.~Ozdaglar, ``Distributed subgradient methods for multi-agent
  optimization,'' \emph{IEEE Trans. Autom. Control}, vol.~54, no.~1, p.~48,
  2009.

\bibitem{tsianos2012distributed}
K.~I. Tsianos and M.~G. Rabbat, ``Distributed strongly convex optimization,''
  in \emph{Proceedings of the Allerton Conference on Communication, Control,
  and Computing}, 2012, pp. 593--600.

\bibitem{yang2015rsg}
T.~Yang and Q.~Lin, ``{R}{S}{G}: {B}eating subgradient method without
  smoothness and strong convexity,'' \emph{The Journal of Machine Learning
  Research}, vol.~19, no.~1, pp. 236--268, 2018.

\bibitem{xu2016homotopy}
Y.~Xu, Y.~Yan, Q.~Lin, and T.~Yang, ``Homotopy smoothing for non-smooth
  problems with lower complexity than {O}$ (1/\epsilon) $,'' in
  \emph{Proceedings of the Advances In Neural Information Processing Systems},
  2016, pp. 1208--1216.

\bibitem{wei2018solving}
X.~Wei, H.~Yu, Q.~Ling, and M.~Neely, ``Solving non-smooth constrained programs
  with lower complexity than {O}$(1/\varepsilon)$: A primal-dual homotopy
  smoothing approach,'' in \emph{Proceedings of the Advances in Neural
  Information Processing Systems}, 2018, pp. 3999--4009.

\bibitem{Zinkevich03ICML}
M.~Zinkevich, ``Online convex programming and generalized infinitesimal
  gradient ascent,'' in \emph{Proceedings of the International Conference on
  Machine Learning}, 2003, pp. 928--936.

\bibitem{nedic2009subgradient}
A.~Nedi{\'c} and A.~Ozdaglar, ``Subgradient methods for saddle-point
  problems,'' \emph{J. Optim. Theory App.}, vol. 142, no.~1, pp. 205--228,
  2009.

\bibitem{sayed2014adaptation}
A.~H. Sayed, ``Adaptation, learning, and optimization over networks,''
  \emph{Found. Trends Mach. Learn.}, vol.~7, pp. 311--801, 2014.

\bibitem{pinelis1994optimum}
I.~Pinelis, ``Optimum bounds for the distributions of martingales in {B}anach
  spaces,'' \emph{The Annals of Probability}, vol.~22, no.~4, pp. 1679--1706,
  1994.

\end{thebibliography}
%
	\vspace*{-2ex}
	\renewcommand{\baselinestretch}{1.25} 

\end{document}